\documentclass[letterpaper, 10pt, conference]{ieeeconf}  

\IEEEoverridecommandlockouts                              
\usepackage{amsmath,amssymb}
\usepackage{url}
\usepackage{times} 
\usepackage{algorithm}
\usepackage{algpseudocode}
\let\labelindent\relax
\usepackage{enumitem}
\usepackage{optidef}
\usepackage{hyperref}
\usepackage{svg}
\hypersetup{
    colorlinks=true,
    linkcolor=blue,
    filecolor=magenta,      
    urlcolor=cyan,
    pdftitle={Overleaf Example},
    pdfpagemode=FullScreen,
    }
\usepackage{subcaption}
\usepackage{booktabs}
\usepackage{url}

\newtheorem{theorem}{Theorem}
\newtheorem{proposition}[theorem]{Proposition}%
\newtheorem{remark}{Remark}%
\usepackage{tikz}
\newtheorem{definition}{Definition}%

\usepackage{graphicx,graphics,epsfig,color}
\usepackage{multicol}
\usepackage{float}
\usepackage{tikz}
\usepackage{comment}

\newcommand{\Pre}{\textrm{Pre}}

\newcommand{\Int}{\textrm{Int}}
\newcommand{\cl}{\textrm{cl}}
\newcommand{\ch}{\textrm{ch}}
\newcommand{\ex}{\textrm{ex}}
\newcommand{\af}{\textrm{Aff}}
\newcommand{\ri}{\textrm{ri}}
\newcommand{\rdelta}{\textrm{r}\partial}
\newcommand{\Single}{\textrm{Single}}

\newif\ifitsdraft
\def\itsdraft{\global\itsdrafttrue}

\itsdraft
\newif\ifitssubmit

\newtheorem{assumption}{Assumption}

\newtheorem{lemma}{Lemma}

\newtheorem{corollary}{Corollary}

\usepackage{graphicx}
\usepackage{textcomp}
\usepackage{xcolor}

\title{\LARGE \bf
A Trajectory-Based Approach to Controlled Invariance and Recursively Feasible MPC}

\author{Emmanuel J. Wafo Wembe and Adnane Saoud
\thanks{Emmanuel J. Wafo Wembe and Adnane Saoud are with The college of computing,
        Mohammed VI Polytechnic University,Ben Guerir, Morocco
        {\tt\small \{emmanueljunior.wafowembe, adnane.saoud\}@um6p.ma}}
}

\begin{document}
\maketitle
\thispagestyle{empty}
\pagestyle{empty}
\begin{abstract}
    In this paper, we revisit the computation of controlled invariant sets for linear discrete-time systems through a trajectory-based viewpoint. We begin by introducing the notion of convex feasible points, which provides a new characterization of controlled invariance using finitely long state trajectories. We further show that combining this notion with the classical backward fixed-point algorithm allows for the computation of the maximal controlled invariant set. Building on these results, we propose a model predictive control (MPC) scheme that guarantees recursive feasibility without relying on precomputed terminal sets. Finally, we formulate the search for convex feasible points as an optimization problem, yielding a practical computational method for constructing controlled invariant sets. The effectiveness of the approach is illustrated through numerical examples.
\end{abstract}
\section{Introduction}
Set invariance is crucial in  constrained control and formal verification~\cite{blanchini_set-theoretic_2008}. For discrete-time linear time-invariant (LTI) systems, a controlled invariant (CI) set provides a region where admissible control inputs ensure state constraint satisfaction over an infinite horizon. Classical computation of these sets typically relies on recursive predecessor operations, leading to fixed-point algorithms that determine the \emph{maximal} CI set~\cite{kolmanovsky_gilbert_1998}. However, these backward-propagation methods face a "curse of dimensionality" due to their reliance on polyhedral operations,such as projections and Minkowski sums, which exhibit exponential computational complexity relative to the state dimension.

To mitigate this, recent works have explored LMI conditions~\cite{BRIAO20219741}, Newton-type accelerations~\cite{Chenyuan_tahir_2019}, and implicit representations~\cite{anevlavs-lui-ozay_2024}. While effective, these remains tied to recursive set-propagation logic. Motivated by scalability, we propose a shift toward a trajectory-based perspective, certifying invariance via the convex hull of finite-length trajectories. By introducing \emph{convex feasible points}, this framework encodes infinite-time invariance into a single-shot optimization program, shifting complexity from the state dimension to the prediction horizon.

This approach bridges the gap between finite-horizon trajectory optimization and infinite-horizon set theory. Unlike the implicit polyhedral methods in~\cite{anevlavs-lui-ozay_2024}, our method bypasses explicit set propagation entirely. This makes the framework a practical alternative for high-dimensional systems where classical tools, such as the MPT toolbox, fail to converge. Furthermore, we demonstrate that by integrating this trajectory-based certificate into a backward-propagation scheme, one can iteratively approximate the maximal CI set with high precision.

\textbf{Related works.}  
Classical invariant-set computation for LTI systems is established in \cite{blanchini_set-theoretic_2008, kolmanovsky_gilbert_1998}, typically using recursive backward-reachable algorithms \cite{7862214, Chenyuan_tahir_2019}. While recent works introduce low-complexity approximations \cite{Tahir_jaimoukha_2015, BRIAO20219741} and implicit representations \cite{anevlavs-lui-ozay_2024}, these remain fundamentally set-based. Trajectory-based characterizations have successfully bypassed set-propagation for monotone systems \cite{9992752, saoud_characterization_2024, WAFOWEMBE2024135}. Building on this, we develop a trajectory-based framework for general LTI systems, encoding controlled invariance as a single feasibility program that complements classical recursive techniques.

Backward fixed-point methods require repeated projections, which are costly in high dimensions. In contrast, the proposed approach computes a candidate invariant set in a \emph{single} step and provides trajectory-based certificates, enabling efficient approximations with good convergence.

The primary contributions of this work are:
\begin{itemize}
    \item A theoretical characterization of controlled invariance based on the convex hull of finite-length trajectories.
    \item A convex feasibility framework that encodes this certificate as a single optimization problem, ensuring computational tractability in high dimensions.
    \item A recursively feasible MPC scheme that utilizes trajectory-induced sets as "on-the-fly" terminal constraints, eliminating the need for offline computation.
    \item A comparative analysis with state-of-the-art methods
\end{itemize}

The paper is organized as follows: Section~\ref{sec:prelim} introduces notation and the system class. Section~\ref{sec:traj} develops the trajectory-based framework, while Section~\ref{sec:MPC} applies it to MPC. Section~\ref{sec:computation} describes the optimization, and Section~\ref{sec:Examples} provides numerical validation.
\ifitssubmit
Due to space, proofs are in the online version of this \href{https://arxiv.org/abs/2604.07225}{paper}.
\fi
\section{Preliminaries}
\label{sec:prelim}
\subsection{Notations}
The symbols $\mathbb{N}$, $ \mathbb{N}_{>0} $, $\mathbb{R}$, and $\mathbb{R}_{>0}$ denote the set of positive integers, non-negative integers, real numbers, and non-negative real numbers, respectively.  Given a nonempty set $K$, $\Int(K)$ denotes its interior, $\cl(K)$ denotes its closure, $\partial K$ denotes its boundary, and $\overline{K}$ its complement. For a set $K$, the operator $\Single(K)$ randomly selects a unique element from the set $K$. The Euclidean norm is denoted by $\|.\|$. For $x \in \mathbb{R}^n$ and for $\varepsilon >0$, $\mathcal{B}_{\varepsilon}(x)=\{z \in \mathbb{R}^n \mid \|z-x\| \leq \varepsilon \}$. For $x,y \in \mathbb{R}^n$, we denote by $[x,y]$, $]x,y[$ the set $\{tx+(1-t)y, t\in [0,1]\}$ and $\{tx+(1-t)y, t\in ]0,1[\}$ respectively.  For a matrix $A \in \mathbb{R}^{n\times m}$, we defines $\|A\| = \sup\limits_{x \neq 0} \frac{\|Ax\|}{\|x\|}$. For any sets $A,B$ the Minkowski sum is defined as $A\oplus B=\{a+b, a\in A, b \in B\}$ and $A\ominus B=\{a-b, a\in A, b \in B\}$. 


We denote the convex hull of a set of points as $\ch(\{x_i\})$. The relative interior of a set $C$ is denoted $\text{ri}(C)$.

The following result introduce a characterization of the relative interior of the convex hull of a set of point. 
\begin{lemma}
\label{lem:Int_char}
    Consider any points $x_0, x_1, \dots, x_n \in \mathbb{R}^d$. Let $x \in \mathbb{R}^d$ , we have $x \in \ri(\ch(x_0, x_1, \dots, x_n))$ if and only if there exists $ (\lambda_0, \lambda_1, \dots ,  \lambda_n) > 0 $ such that $\sum\limits_{i =1}^n \lambda_i = 1 $ and $\sum\limits_{i =1}^n \lambda_i x_i = x$.
\end{lemma}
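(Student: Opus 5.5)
Throughout, I read the sums in the statement as running over $i=0,\dots,n$, since $\lambda_0$ and $x_0$ are included among the weights and points. Write $C=\ch(x_0,\dots,x_n)$ and $L=\af(C)$. Every point of $C$ has the form $\sum_i \mu_i x_i$ with $\mu\ge 0$ and $\sum_i\mu_i=1$. The linear part of $L$ is $V=\mathrm{span}\{x_i-x_0\}$. Both directions will come from moving along directions in $V$ and using the fact that $C$ is a polytope.

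\textbf{($\Leftarrow$)} Let $x=\sum_i\lambda_i x_i$ with every $\lambda_i>0$ and $\sum_i\lambda_i=1$. I must find $\varepsilon>0$ with $\mathcal{B}_\varepsilon(x)\cap L\subseteq C$.
\begin{itemize}[leftmargin=*]
\item Fix a basis $v_1,\dots,v_k$ of $V$ chosen among the vectors $x_i-x_0$.
\item Any $y\in L$ satisfies $y-x=\sum_j c_j v_j$ with $|c|\le K\|y-x\|$, by equivalence of norms on $V$.
\item Each $v_j=x_{i_j}-x_0$ shifts the barycentric weights by $+c_j$ on $x_{i_j}$ and $-c_j$ on $x_0$. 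The perturbed weights still sum to $1$.
\item Each weight moves by at most $\sum_j|c_j|\le K'\|y-x\|$. Taking $\varepsilon<\min_i\lambda_i/K'$ keeps all weights nonnegative, so $y\in C$.
\end{itemize}

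\textbf{($\Rightarrow$)} Let $x\in\ri(C)$, and take the barycenter $b=\frac{1}{n+1}\sum_i x_i$.
\begin{itemize}[leftmargin=*]
\item If $x=b$, the uniform weights already work.
\item Otherwise $b\in L$, and $x$ lies in the relative interior, so for small $t>0$ the point $z=x+t(x-b)$ belongs to $L$ and is close to $x$, hence $z\in C$.
\item Write $z=\sum_i\mu_i x_i$ with $\mu\ge0$ and $\sum_i\mu_i=1$.
\item Solving for $x$ gives $x=\frac{1}{1+t}z+\frac{t}{1+t}b$.
\item The weights are therefore $\lambda_i=\frac{\mu_i}{1+t}+\frac{t}{(1+t)(n+1)}$. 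These are strictly positive and sum to $1$.
\end{itemize}

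The only delicate point is the uniform bound on the coefficients $c_j$ in ($\Leftarrow$). It holds because $V$ is finite-dimensional, so any norm on the coefficient vector is equivalent to the Euclidean norm of $\sum_j c_jv_j$. The argument also covers repeated or affinely dependent points, since it never uses uniqueness of the barycentric weights.
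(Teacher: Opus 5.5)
Your proof is correct, and it takes a different route from the paper in both directions.

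For ($\Rightarrow$), the paper starts from an arbitrary representation $x=\sum_i\alpha_i x_i$. For each index $i$ with $\alpha_i=0$, it extends the line from $x_i$ through $x$ to a relative-boundary point $z_i\in H$, using compactness to get a finite endpoint, and writes $x=\gamma_i z_i+(1-\gamma_i)x_i$ with $0\le\gamma_i<1$. It then averages these $m+1$ representations so that every weight becomes positive. You replace this with a single extension away from the barycenter $b$. Since every weight of $b$ is positive, $x=\frac{1}{1+t}z+\frac{t}{1+t}b$ inherits strict positivity at once. This removes the case split $x=x_i$, the index bookkeeping, and any use of boundary points.

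For ($\Leftarrow$), the paper argues by contradiction. It asserts without proof that if $x\in\rdelta(H)$, some $z\in H\setminus\{x\}$ admits no extension $tx+(1-t)z\in H$ for $t>1$. It then shows that the weights $t\lambda_i+(1-t)\alpha_i$ stay nonnegative for $t$ slightly above $1$. That line-based characterization of the relative interior is essentially Rockafellar's Theorem 6.4, and it is where the genuinely multidimensional content is hidden. You verify the definition of $\ri$ directly instead. You choose a basis of the direction space among the edge vectors $x_i-x_0$ and use the bounded inverse of the coordinate map to control all barycentric weights at once. This makes the argument self-contained and yields an explicit radius $\varepsilon$ in terms of $\min_i\lambda_i$ and the basis constant.

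The paper's approach only ever moves along single lines, which keeps each step one-dimensional, but it pays for that by relying on the unproved boundary characterization.
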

\subsection{Linear discrete-time  control systems}

In this paper, we consider the class of linear discrete-time control systems $\Sigma$ of the form:
\begin{equation}
\label{dis_sys}
x^+  = A x + B u 
\end{equation}
where $x \in \mathcal{X} \subseteq \mathbb{R}^n$ is the state, $u \in \mathcal{U} \subseteq \mathbb{R}^m$ is the control input. The trajectories of (\ref{dis_sys}) are denoted by $\Phi(.,x_0,\mathbf{u})$ where $\Phi(t,x_0,\mathbf{u})$ is the state reached at time $t \in \mathbb{N}_{\geq 0}$ from the initial state $x_0$ under the control input $\mathbf{u}:\mathbb{N}_{\geq 0} \rightarrow \mathcal{U}$.

When the control inputs of system $\Sigma$ in (\ref{dis_sys}) are generated by a state-feedback controller $\kappa:\mathcal{X} \rightarrow \mathcal{U}$,  the dynamics of the closed-loop system is given by
\begin{equation}
\label{eqn:syst_cl}
    x^+ = A x + B \kappa(x) 
\end{equation}
The trajectories of (\ref{eqn:syst_cl}) are denoted by $\Phi_{\kappa}(.,x_0)$ where $\Phi_{\kappa}(t,x_0)$ is the state reached at time $t \in \mathbb{N}_{\geq 0}$ from the initial state $x_0$. 

\subsection{Controlled Invariance}

Over the years, controlled invariance has been a cornerstone of many theoretical and applied research efforts \cite{blanchini_set-theoretic_2008,rakovic_parameterized_2010}. We start by recalling the concept of controlled invariants.

\begin{definition}
\label{def:RCI}
    Consider the system $\Sigma$ in (\ref{dis_sys})  and let $X \subseteq \mathcal{X}$ and $U \subseteq \mathcal{U}$  be the constraint sets on the states, and inputs, respectively. A set $S$ is a controlled invariant for system $\Sigma$  subject to the constraint set $(X,U)$ if $S \subseteq X$ and 
        \begin{equation}
        \label{eqn:rci_charac}
        \forall ~ x~ \in S, \exists u \in U \mbox{ such that } A x + B u  \in  S 
    \end{equation} 
\end{definition}

Because controlled invariance is stable under unions, the existence of a maximal controlled invariant set is guaranteed. 

\begin{definition}
\label{def:contr_invar}
Consider the system $\Sigma$ in (\ref{dis_sys}) and let $X \subseteq \mathcal{X}$, $U \subseteq \mathcal{U}$ be the constraint sets on the states,  inputs, respectively. The set $K \subseteq \mathcal{X}$ is the \emph{maximal} controlled invariant for the system $\Sigma$ and constraint set $(X,U)$ if:
\begin{itemize}
    \item $K \subseteq \mathcal{X}$ is a controlled invariant for the system $\Sigma$ and constraint set $(X,U)$;
    \item $K$ contains any controlled invariant for the system $\Sigma$ and constraint set $(X,U)$.
\end{itemize}
\end{definition} 
\subsection{Backward Reachable Set}
For discrete-time systems, the backward reachable set comprises all states from which a target set can be reached under admissible control inputs \cite{liu2023convergencebackwardreachablesets, 1618830}. 

\begin{definition} Consider the system $\Sigma$ in \eqref{dis_sys} . Given a set $  H \subseteq \mathbb{R}^n$, the one-step backward reachable set $\operatorname{Pre}_{\Sigma}\left(H, X,U\right)$ of $H$ with respect to the system $\Sigma$ and constraint set $X,U$, is  define by:
$$
\Pre_{\Sigma}\left(H, X,U\right)=\left\{x \mid x \in X, \exists u \in U \, A x+B u\in H\right\} .
$$
\end{definition}
Backward  reachability  can  be used  to characterise controlled invariant set.
\begin{proposition}\cite{liu2023convergencebackwardreachablesets}
Consider the system $\Sigma$ in \eqref{dis_sys} with constraint set $(X,U)$.The following results are true: 
\begin{itemize}
    \item A subset $H\subseteq X$ is a controlled invariant  if and only if $H\subseteq \Pre_{\Sigma}\left(H, X,U\right)$.
    \item If a subset $H\subseteq X$ is a controlled invariant  then $\Pre_{\Sigma}\left(H, X,U\right)$ is a controlled invariant.  
    \item The set $X_{max}$,  the maximal controlled invariant set or the system $\Sigma$ and constraint set $(X,U)$ satisfy   $X_{max}= \Pre_{\Sigma}\left(X_{max}, X,U\right)$
\end{itemize} 
\end{proposition}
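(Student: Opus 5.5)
The plan is to prove the three items in order, since each uses the previous one. All three rest on one observation: $H \subseteq X$ is controlled invariant exactly when every $x\in H$ has some $u\in U$ with $Ax+Bu\in H$. That condition is also exactly the statement $x\in \Pre_{\Sigma}(H,X,U)$, using $x\in H\subseteq X$.

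For the first item, assume $H$ is controlled invariant and take $x\in H$. Then $x\in X$ because $H\subseteq X$. Definition~\ref{def:RCI} gives $u\in U$ with $Ax+Bu\in H$, so $x\in\Pre_{\Sigma}(H,X,U)$. For the converse, assume $H\subseteq X$ and $H\subseteq \Pre_{\Sigma}(H,X,U)$. Each $x\in H$ then admits $u\in U$ with $Ax+Bu\in H$, which is (\ref{eqn:rci_charac}). Hence $H$ is controlled invariant.

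For the second item, set $P=\Pre_{\Sigma}(H,X,U)$. By definition $P\subseteq X$. By the first item $H\subseteq P$, and $\Pre$ is monotone in its first argument (if $H\subseteq H'$ then $\Pre(H)\subseteq\Pre(H')$). Therefore $P=\Pre(H)\subseteq \Pre(P)$, and the first item shows that $P$ is controlled invariant. The same argument can be phrased directly: for $x\in P$ there is $u\in U$ with $Ax+Bu\in H\subseteq P$.

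For the third item, first note that $X_{max}$ exists because controlled invariance is preserved under arbitrary unions. It is the union of all controlled invariant subsets of $X$, and that union is again controlled invariant since any witness input keeps the successor in the union. The first item gives $X_{max}\subseteq\Pre(X_{max})$. The second item shows $\Pre(X_{max})$ is controlled invariant, so maximality gives $\Pre(X_{max})\subseteq X_{max}$, and equality follows.

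The only point needing care is the existence of $X_{max}$, i.e.\ closure under unions, but it is routine. None of the earlier trajectory or convexity results are needed.
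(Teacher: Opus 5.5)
Your proof is correct. The paper gives no proof of this proposition: it imports the result from \cite{liu2023convergencebackwardreachablesets}, a reference that, as its title indicates, works with robust controlled invariant sets. So your argument is less a different route than the self-contained verification the paper omits.

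Your key observation carries all three items. For $H\subseteq X$, requiring $x\in\Pre_{\Sigma}(H,X,U)$ for every $x\in H$ is literally condition (\ref{eqn:rci_charac}).
\begin{itemize}
\item The first item is a restatement of this observation.
\item The second item holds because every point of $\Pre_{\Sigma}(H,X,U)$ has a successor in $H\subseteq\Pre_{\Sigma}(H,X,U)$.
\item The third item combines the first two with maximality.
\end{itemize}

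Your union argument also justifies the paper's unproved remark, just before Definition~\ref{def:contr_invar}, that $X_{max}$ exists. It covers the degenerate case as well: $\emptyset$ is trivially controlled invariant and $\Pre_{\Sigma}(\emptyset,X,U)=\emptyset$.

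The citation buys only generality. With disturbances, the robust predecessor gains an extra universal quantifier over the disturbance, and your proof goes through verbatim. Your version shows that the statement needs no convexity, compactness, or even linearity of the dynamics; any transition map would do.
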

 The \emph{$k$-step backward reachable set} is defined recursively as follows
$ H^{0} = H, \, H^{k+1} = \operatorname{Pre}_{\Sigma}(H^{k}, X, U).
$
If the set $X$ is compact, the $k$-step backward algorithm is guaranteed to converge. 
This observation leads to two common formulations: 
the \emph{outside-in} algorithm, initialized with $H^{0} = X$, 
and the \emph{inside-out} algorithm, initialized with $H^{0} = H$ is a controlled invariant.
\section{Trajectory-based Characterizations of Controlled Invariants}
\label{sec:traj}

In this section, we introduce a new notion of convex feasibility. We then show that convex feasible trajectories can be used to construct controlled invariant sets.

\begin{definition}
\label{def:feas}
Consider the system $\Sigma$ in (\ref{dis_sys}) and let $X \subseteq \mathcal{X}$, $U \subseteq \mathcal{U}$ be the constraint sets on the states, inputs respectively. A point $x_0 \in X$ is said to be {\it open-loop convex feasible} with respect to the constraint set $(X,U)$ if there exists $\epsilon >0$ an input trajectory $\mathbf{u}:\mathbb{N}_{\geq 0} \rightarrow U$ and $N>n+1$ such that 
\begin{equation}
\label{eqn:feas1o}
    \Phi(t,x_0,\mathbf{u})  \in X, \quad \forall~ 0\leq t \leq N-1 
\end{equation}
and 
\begin{equation}
\label{eqn:feas2o}
 \Phi(N,x_0,\mathbf{u}) + \mathcal{B}_{\epsilon}(0) \subseteq \ch\left(\bigcup\limits_{t= 0}^{N-1}\{\Phi(t,x_0,\mathbf{u})\}\right).
\end{equation}

A point $x_0 \in X$ is said to be {\it closed-loop convex feasible} with respect to the constraint set $(X,U,D)$ if there exists $\epsilon >0$, a controller $\kappa: X \rightarrow U$ and $N>n+1$ such that 
\begin{equation}
\label{eqn:feas1c}
    \Phi_{\kappa}(t,x_0)  \in X, \quad \forall~ 0\leq t \leq N-1 
\end{equation}
and 
\begin{equation}
\label{eqn:feas2c}
\Phi_{\kappa}(N,x_0) + \mathcal{B}_{\epsilon}(0) \subseteq \ch\left(\bigcup\limits_{t= 0}^{N-1} \{\Phi_{\kappa}(t,x_0)\}\right).
\end{equation}
\end{definition}

\begin{figure}[ht]
    \centering
\tikzset{every picture/.style={line width=0.75pt}} 
\resizebox{0.5\linewidth}{!}{\begin{tikzpicture}[x=0.75pt,y=0.75pt,yscale=-1,xscale=1]

\draw  (50,356.9) -- (518,356.9)(96.8,23) -- (96.8,394) (511,351.9) -- (518,356.9) -- (511,361.9) (91.8,30) -- (96.8,23) -- (101.8,30)  ;
\draw  [color={rgb, 255:red, 255; green, 255; blue, 255 }  ,draw opacity=1 ][fill={rgb, 255:red, 208; green, 2; blue, 27 }  ,fill opacity=1 ] (158.41,293.42) .. controls (158.41,291.45) and (156.3,289.85) .. (153.7,289.85) .. controls (151.11,289.85) and (149,291.45) .. (149,293.42) .. controls (149,295.4) and (151.11,297) .. (153.7,297) .. controls (156.3,297) and (158.41,295.4) .. (158.41,293.42) -- cycle ;
\draw  [color={rgb, 255:red, 255; green, 255; blue, 255 }  ,draw opacity=1 ][fill={rgb, 255:red, 208; green, 2; blue, 27 }  ,fill opacity=1 ] (156.84,70.58) .. controls (156.84,68.6) and (158.95,67) .. (161.54,67) .. controls (164.14,67) and (166.25,68.6) .. (166.25,70.58) .. controls (166.25,72.55) and (164.14,74.15) .. (161.54,74.15) .. controls (158.95,74.15) and (156.84,72.55) .. (156.84,70.58) -- cycle ;
\draw  [color={rgb, 255:red, 255; green, 255; blue, 255 }  ,draw opacity=1 ][fill={rgb, 255:red, 208; green, 2; blue, 27 }  ,fill opacity=1 ] (457.91,94.41) .. controls (457.91,92.43) and (460.02,90.83) .. (462.62,90.83) .. controls (465.21,90.83) and (467.32,92.43) .. (467.32,94.41) .. controls (467.32,96.38) and (465.21,97.98) .. (462.62,97.98) .. controls (460.02,97.98) and (457.91,96.38) .. (457.91,94.41) -- cycle ;
\draw  [color={rgb, 255:red, 255; green, 255; blue, 255 }  ,draw opacity=1 ][fill={rgb, 255:red, 208; green, 2; blue, 27 }  ,fill opacity=1 ] (473.59,224.31) .. controls (473.59,222.33) and (475.7,220.73) .. (478.3,220.73) .. controls (480.89,220.73) and (483,222.33) .. (483,224.31) .. controls (483,226.28) and (480.89,227.88) .. (478.3,227.88) .. controls (475.7,227.88) and (473.59,226.28) .. (473.59,224.31) -- cycle ;
\draw  [color={rgb, 255:red, 255; green, 255; blue, 255 }  ,draw opacity=1 ][fill={rgb, 255:red, 144; green, 19; blue, 254 }  ,fill opacity=1 ] (288.56,182.6) .. controls (288.56,180.62) and (290.66,179.02) .. (293.26,179.02) .. controls (295.86,179.02) and (297.97,180.62) .. (297.97,182.6) .. controls (297.97,184.57) and (295.86,186.17) .. (293.26,186.17) .. controls (290.66,186.17) and (288.56,184.57) .. (288.56,182.6) -- cycle ;
\draw    (161.54,74.15) -- (153.7,289.85) ;
\draw    (478.3,224.31) -- (158.41,293.42) ;
\draw    (462.62,97.98) -- (478.3,224.31) ;
\draw    (166.25,70.58) -- (457.91,94.41) ;

\draw (129,22) node [anchor=north west][inner sep=0.75pt]  [font=\LARGE] [align=left] {$\displaystyle x_{0}$};
\draw (464,55) node [anchor=north west][inner sep=0.75pt]  [font=\LARGE] [align=left] {$\displaystyle x_{1}$};
\draw (153.7,297) node [anchor=north west][inner sep=0.75pt]  [font=\LARGE] [align=left] {$\displaystyle x_{2}$};
\draw (485,229) node [anchor=north west][inner sep=0.75pt]  [font=\LARGE] [align=left] {$\displaystyle x_{3}$};
\draw (302.92,190.1) node [anchor=north west][inner sep=0.75pt]  [font=\LARGE] [align=left] {$\displaystyle x_{4}$};

\end{tikzpicture}
}
\caption{Convex feasible Trajectory initiated at $x_0$. The points $x_1=\Phi(1,x_0, \mathbf{u})$ at the top right, $x_2 = \Phi(2,x_0, \mathbf{u})$  and  $x_3=\Phi(3,x_0, \mathbf{u})$ at the bottom left and right respectively.  Finally $ x_4 = \Phi(  4,x_0, \mathbf{u}) \in \ch\left(\bigcup\limits_{t=0}^{3}\{ \Phi(t,x_0, \mathbf{u})\} \right)$.}
    \label{fig:feas_traj}
    \vspace{-15pt}
\end{figure}
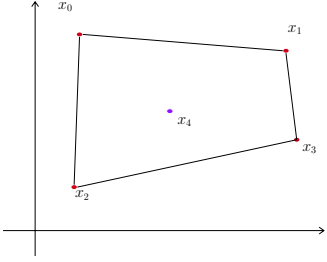
\begin{remark}
\label{rem:strict}
A more general formulation allowing $\epsilon \geq 0$ and $N \geq 0$ can be considered. 
Most of the properties established in this paper extend to this setting, with the exception of Theorem~4. 
In particular, the condition $\epsilon > 0$ is required in Theorem~4 to ensure convergence towards the maximal controlled invariant, 
whereas $\epsilon = 0$ is sufficient for Theorem~2.
\end{remark}

Intuitively, a point is \emph{convex feasible} if the state reached at time $N$ lies in the interior of the convex hull of the trajectory over the horizon $[0, N-1]$, i.e. $x_N \in \Int\!\big(\ch\{x_0, \dots, x_{N-1}\}\big).$
An illustration is provided in Fig.~\ref{fig:feas_traj}. 
The following result establishes a connection between convex feasibility and controlled invariance.

\begin{theorem}
     
\label{prop:feas_inv}
Consider the system $\Sigma$ in (\ref{dis_sys}) and let $X \subseteq \mathcal{X}$ and $U \subseteq \mathcal{U}$  be the constraint sets on the states, inputs, and disturbances, respectively, where the set $X$ is closed convex,  $U$ is compact convex.
\begin{itemize}
    \item[(i)] If $x_0 \in X$ is open-loop convex feasible w.r.t the constraint set $(X,U)$, then there exists an input trajectory $\mathbf{u}:\mathbb{N}_{\geq 0} \rightarrow U$ and $N \in \mathbb{N}_{>0}$ such that the set  \begin{equation} S= \cl\left(\ch\left(\{\Phi(t,x_0,\mathbf{u})\mid 0\leq t \leq N-1\}\right)\right)    \end{equation} is a controlled invariant for the system $\Sigma$ and constraint set $(X,U)$;

    \item[(ii)] If $x_0$ is closed loop convex feasible w.r.t the constraint set $(X,U)$, then there exists a controller $\kappa:X \rightarrow U$ and $N \in \mathbb{N}_{>0}$ such that the set  \begin{equation} S=\cl\left(\ch\left(\{\Phi_{\kappa}(t,x_0)\mid 0\leq t \leq N-1\} \right)\right)   \end{equation} is a controlled invariant for the system $\Sigma$ and constraint set $(X,U)$.
\end{itemize}

\end{theorem}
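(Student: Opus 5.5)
Write $x_t=\Phi(t,x_0,\mathbf{u})$ for $0\le t\le N$, and take $\mathbf{u}$, $N$, $\epsilon$ from Definition~\ref{def:feas}. Since $S$ is the convex hull of finitely many points, it is a polytope and hence already closed, so $\cl$ changes nothing. To show $S$ is controlled invariant, the plan is to verify the two conditions of Definition~\ref{def:RCI}: first $S\subseteq X$, then the one-step condition (\ref{eqn:rci_charac}). The idea is to handle the vertices using the given inputs, and then extend to all of $S$ by linearity of the dynamics and convexity of $U$.

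\emph{Step 1: $S\subseteq X$.} By (\ref{eqn:feas1o}), each vertex $x_0,\dots,x_{N-1}$ lies in $X$. Since $X$ is convex, their convex hull $S$ is contained in $X$.

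\emph{Step 2: vertices are steered into $S$.} For $0\le t\le N-2$, apply the input $u(t)\in U$. This gives $Ax_t+Bu(t)=x_{t+1}$, which is a vertex of $S$. For $t=N-1$, the input $u(N-1)$ gives $Ax_{N-1}+Bu(N-1)=x_N$. By (\ref{eqn:feas2o}) we have $x_N\in x_N+\mathcal{B}_\epsilon(0)\subseteq S$; here even $\epsilon=0$ would suffice. So every vertex $x_t$ has an admissible input $v_t\in U$ with $Ax_t+Bv_t\in S$.

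\emph{Step 3: extension to all of $S$.} Take an arbitrary $x\in S$ and write $x=\sum_{t=0}^{N-1}\lambda_t x_t$ with $\lambda_t\ge0$ and $\sum_t\lambda_t=1$. Define $u=\sum_t\lambda_t v_t$, which lies in $U$ because $U$ is convex. By linearity,
\[
Ax+Bu=\sum_{t=0}^{N-1}\lambda_t\,(Ax_t+Bv_t),
\]
which is a convex combination of points of $S$. Since $S$ is convex, $Ax+Bu\in S$. This establishes (\ref{eqn:rci_charac}) and proves part (i).

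\emph{Part (ii).} This reduces to part (i). Along the closed-loop trajectory, set $u(t)=\kappa(\Phi_\kappa(t,x_0))\in U$ for $t\le N-1$, and extend $u$ arbitrarily in $U$ for later times. Then $\Phi(t,x_0,\mathbf{u})=\Phi_\kappa(t,x_0)$ for $t\le N$. Hence (\ref{eqn:feas1c})–(\ref{eqn:feas2c}) are exactly (\ref{eqn:feas1o})–(\ref{eqn:feas2o}), so $x_0$ is open-loop convex feasible and part (i) applies with the same set $S$.

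\emph{Main obstacle.} There is essentially none; the only delicate point is the last vertex $x_{N-1}$, whose successor is not itself a vertex. That case is handled by the containment (\ref{eqn:feas2o}), and it is the only place convex feasibility enters. Compactness of $U$ and the margin $\epsilon>0$ are not needed for this argument.
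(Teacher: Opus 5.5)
Your proof is correct and follows the paper's argument: write $x\in S$ as a convex combination of the trajectory points, apply the same convex combination of the inputs $\mathbf{u}(t)$, and use linearity together with the terminal containment (\ref{eqn:feas2o}) and convexity of $S$. Your version is slightly tidier than the paper's, since you explicitly check $S\subseteq X$, note that $\cl$ is redundant, and obtain (ii) from (i) via the trajectory identification of Proposition~\ref{prop:feas_equiv} instead of repeating the computation.
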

\textbf{\underline{Proof' sketch:}}
Any $x \in S$ is a convex combination of trajectory states. By linearity, its successor $x^+$ is the same combination of their successors. The terminal feasibility condition ensures these successors remain in $S$, implying controlled invariance.

At first glance, open- and closed-loop convex feasibility may seem unrelated; however, the following result demonstrates that they are equivalent.  

\begin{proposition}
\label{prop:feas_equiv}
    Consider the system $\Sigma$ in (\ref{dis_sys}) and let $X \subseteq \mathcal{X}$, $U \subseteq \mathcal{U}$  be the constraint sets on the states and inputs, respectively, where the set $X$ is closed convex. $x_0$ is open-loop convex feasible for constraint set  $(X,U)$ if and only if  $x_0$ is closed loop convex feasible for constraint set $(X,U)$.
\end{proposition}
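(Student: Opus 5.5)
The plan is to prove the two implications separately, transferring the same finite trajectory from one setting to the other. The key observation is that both definitions only involve the first $N+1$ states of a trajectory. These are $\Phi(t,x_0,\mathbf{u})$ for $0\le t\le N$ in the open-loop case and $\Phi_\kappa(t,x_0)$ in the closed-loop case. So it suffices to show that any such finite state sequence generated one way can also be generated the other way, with inputs in $U$. The same $\epsilon$ and $N$ then certify \eqref{eqn:feas1o}--\eqref{eqn:feas2o} or \eqref{eqn:feas1c}--\eqref{eqn:feas2c}.

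\emph{Closed loop $\Rightarrow$ open loop.} Assume $x_0$ is closed-loop convex feasible with controller $\kappa:X\to U$, horizon $N$ and radius $\epsilon$. Define the input sequence $\mathbf{u}(t)=\kappa(\Phi_\kappa(t,x_0))$ for $0\le t\le N-1$; this is well defined because \eqref{eqn:feas1c} gives $\Phi_\kappa(t,x_0)\in X$. For $t\ge N$, set $\mathbf{u}(t)$ equal to an arbitrary fixed element of $U$ (here $U\neq\emptyset$, which holds because $\kappa$ maps into $U$). A routine induction on $t$ with the recursion \eqref{dis_sys} gives $\Phi(t,x_0,\mathbf{u})=\Phi_\kappa(t,x_0)$ for $0\le t\le N$. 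Hence \eqref{eqn:feas1o} and \eqref{eqn:feas2o} follow directly from \eqref{eqn:feas1c} and \eqref{eqn:feas2c}.

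\emph{Open loop $\Rightarrow$ closed loop.} This is the direction that needs care. Let $\mathbf{u}$, $N$, $\epsilon$ witness open-loop convex feasibility and write $x_t=\Phi(t,x_0,\mathbf{u})$. A naive feedback $\kappa(x_t):=\mathbf{u}(t)$ is ill-defined if the trajectory revisits a state, $x_s=x_t$ with $s<t$, while $\mathbf{u}(s)\neq\mathbf{u}(t)$. The main obstacle is therefore to produce a time-invariant state feedback that reproduces a feasible trajectory. I would use the fact that the feasibility conditions do not depend on \emph{which} feasible trajectory is used. First, if $x_0$ recurs, say $x_s=x_0$ for some $s\ge 1$, I would shift the problem to a recurrence-free segment. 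Otherwise, I would let $\tau$ be the first time a repetition $x_\tau=x_s$ with $s<\tau$ occurs, and splice out the loop: keep inputs $\mathbf{u}(0),\dots,\mathbf{u}(s-1)$, then continue with $\mathbf{u}(\tau),\mathbf{u}(\tau+1),\dots$.

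The splicing step needs a convexity argument. Removing loop states shrinks the convex hull, so \eqref{eqn:feas2o} may fail for the shortened trajectory, and this is the delicate point. To avoid it, I would attack the problem directly. I would define $\kappa$ on the finite set $\{x_0,\dots,x_{N-1}\}$ using the \emph{last} visit time, $\kappa(x)=\mathbf{u}(\max\{t\le N-1: x_t=x\})$, and extend $\kappa$ arbitrarily into $U$ elsewhere on $X$. The closed-loop trajectory from $x_0$ then jumps forward along the open-loop one. Its visited states form a subset of $\{x_0,\dots,x_{N-1}\}$, and it reaches $x_N$ after $N'\le N$ steps. If the closed-loop sequence is strictly shorter, I would restore the convex hull by making the closed loop remain periodic within the finite set. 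The statement $x_N+\mathcal{B}_\epsilon(0)\subseteq\ch\{x_0,\dots,x_{N-1}\}$ must be preserved with the same hull, and the hull only changes through which points are visited. So I would first check whether the last-visit rule visits every distinct point among $x_0,\dots,x_{N-1}$ that is an extreme point of the hull; only extreme points matter for $\ch$. If some extreme point is skipped, I would fall back on the observation that skipped points belong to a cycle $x_s=\dots=x_\tau$. Such a cycle can be traversed by any closed-loop state-dependent rule only if it is a fixed point, i.e.\ $\tau=s+1$, and fixed points add no new hull points. I would try to reduce to this case, or else allow the definition's freedom in $N$ to absorb the discrepancy, while also making sure the requirement $N>n+1$ remains satisfied.
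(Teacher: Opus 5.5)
You prove the closed-loop $\Rightarrow$ open-loop direction correctly, and your argument is the paper's. The gap is in the converse.

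You rightly note that $\kappa(\Phi(t,x_0,\mathbf{u})):=\mathbf{u}(t)$ is ill-defined when a state recurs with a different successor. This is exactly the feedback the paper uses, and its induction tacitly assumes no such recurrence. Your proposal, however, never resolves the problem:
\begin{itemize}
\item The last-visit rule produces the loop-erased path. That path can drop the hull vertices that made $x_N$ interior, and it can have $N'\le n+1$.
\item The splicing and ``make the closed loop periodic'' ideas remain intentions, not arguments.
\item Your claim that a feedback can traverse a loop only if the loop is a fixed point is wrong. Since $\Phi_\kappa(t+1,x_0)$ depends only on $\Phi_\kappa(t,x_0)$, once a closed-loop trajectory revisits any state (a fixed point included) it stays on that cycle forever. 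A loop can never be traversed and then exited.
\end{itemize}
So your opening reduction, that any finite state sequence generated in open loop can also be generated in closed loop, is false in this direction.

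In fact no repair is possible under the stated hypotheses, which impose no convexity on $U$. Take $n=1$, $x^+=x+u$, $X=[-1,1]$, $U=\{-1,1\}$ and $x_0=0$.
\begin{itemize}
\item \emph{Open loop.} The inputs $1,-1,-1,1$ give the states $0,1,0,-1,0$. With $N=4>n+1$ and $\epsilon=1$, we get $x_4+\mathcal{B}_1(0)=[-1,1]$, which is the hull of $x_0,\dots,x_3$. So $x_0$ is open-loop convex feasible.
\item \emph{Closed loop.} Suppose $\kappa(0)=1$. The requirement $N'\ge 3$ forces $\Phi_\kappa(2,0)=1+\kappa(1)\in X$, hence $\kappa(1)=-1$. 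The closed loop is then $0,1,0,1,\dots$, whose hull is $[0,1]$, and $\Phi_\kappa(N',0)\in\{0,1\}$ always lies on its boundary. The case $\kappa(0)=-1$ is symmetric. So $x_0$ is not closed-loop convex feasible.
\item \emph{Your rule here.} The last-visit rule gives $0,-1,0$, which fails both $N'>n+1$ and interiority.
\end{itemize}

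Hence the equivalence is false as stated. Closing this direction needs an extra hypothesis (convexity of $U$, as assumed in Theorem~\ref{prop:feas_inv}, is the natural candidate). It also needs a genuinely new argument that produces an admissible trajectory with no state revisited with a different successor. One might try perturbing toward another admissible trajectory and using $\epsilon>0$ to preserve the hull condition, but coincidences forced on all admissible trajectories would still have to be handled. Neither your proposal nor the paper's proof supplies such an argument.
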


The next result shows how invariants from convex feasible trajectories approximate the maximal controlled invariant set.

\begin{theorem}
\label{thm:stri_feas}
    Consider the system $\Sigma$ in (\ref{dis_sys}) and let $X \subseteq \mathcal{X}$, $U \subseteq \mathcal{U}$ be the constraint sets on the states and  inputs respectively. We suppose that $X,U$ are compact convex sets. If  $x_0 \in X$ is  {\it strictly open-loop convex feasible} with respect to the constraint set $(X,U)$, 
    then: 
    \begin{itemize}
        \item the set $H = \ch\,(\Phi(0, x_0,\mathbf{u}), \dots, \Phi(N-1, x_0, \mathbf{u}))$ is a  controlled invariant.  
    \item The sequence defined by $H_0 = H$, $H_{t+1} = \Pre_{\Sigma}\left(H_{t}, X,U, D\right)$ is composed of controlled invariant sets and converges in the Hausdorff metric to the maximal controlled invariant set. 
    \end{itemize}
\end{theorem}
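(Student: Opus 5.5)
The plan is to prove the two items separately. The first follows from earlier results. The second reduces to a monotonicity argument plus one density argument, and the density argument is the real obstacle.

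\textbf{First item.} Strict open-loop convex feasibility is the case $\epsilon>0$ of Definition~\ref{def:feas}, so $x_0$ is in particular open-loop convex feasible. Theorem~\ref{prop:feas_inv}(i) then shows that $\cl(H)$ is controlled invariant. Since $H$ is the convex hull of finitely many points, it is compact, so $\cl(H)=H$ and $H$ itself is controlled invariant. I will also record that the $\epsilon$-ball gives $\mathcal{B}_\epsilon(x_N)\subseteq H$ with $x_N=\Phi(N,x_0,\mathbf{u})$, so $H$ has nonempty interior. This fact is needed for the second item.

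\textbf{Second item: the easy parts.}
\begin{itemize}
\item \emph{Each $H_t$ is controlled invariant and the sequence is increasing.} By the Proposition on backward reachability, $H\subseteq\Pre_\Sigma(H,X,U)$, and $\Pre$ of a controlled invariant set is controlled invariant. By induction every $H_t$ is controlled invariant. Since $\Pre$ is monotone under inclusion, $H_t\subseteq H_{t+1}$.
\item \emph{Each $H_t$ is compact and convex.} $\Pre$ of a compact convex set under linear dynamics, with $X,U$ compact convex, is compact and convex: it is a linear projection of the compact convex set $\{(x,u)\in X\times U: Ax+Bu\in H_t\}$.
\item \emph{Each $H_t$ lies in the maximal set.} By maximality, $H_t\subseteq X_{max}$ for all $t$. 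Moreover $X_{max}$ is convex and compact, because the convex hull and the closure of a controlled invariant set are again controlled invariant under these assumptions.
\end{itemize}
An increasing sequence of compact sets inside a compact set converges in the Hausdorff metric to $\cl\big(\bigcup_t H_t\big)$. It therefore remains to show that $\bigcup_t H_t$ is dense in $X_{max}$.

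\textbf{Second item: density, the main obstacle.} Fix $x\in X_{max}$ and $\delta>0$. I would look at convex combinations $z_\lambda=(1-\lambda)x+\lambda y$ with $y=x_N\in\Int(H)$ and small $\lambda>0$, so that $\|z_\lambda-x\|\le\lambda\,\mathrm{diam}(X)<\delta$. Let $\mathbf{v}$ be an admissible input keeping the trajectory of $x$ in $X_{max}$, and let $\mathbf{w}$ be an input keeping the trajectory of $y$ in $H$. The combined input $(1-\lambda)\mathbf{v}+\lambda\mathbf{w}$ is admissible by convexity of $U$, and by linearity it produces the trajectory $(1-\lambda)\Phi(t,x,\mathbf{v})+\lambda\Phi(t,y,\mathbf{w})$. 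The goal is to show that this trajectory enters $H$ after finitely many steps $T$, which gives $z_\lambda\in H_T$.

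This is where $\epsilon>0$ must be used. The trajectory of $y$ can be chosen to revisit points with an $\epsilon$-ball inside $H$: by the convex-feasibility recursion, the successors of $x_N$ are again convex combinations that keep a positive margin inside $H$. What remains is to get the discrepancy $(1-\lambda)\big(\Phi(t,x,\mathbf{v})-\Phi(t,y,\mathbf{w})\big)$ below that margin. This is not automatic, since the $X_{max}$-trajectory need not approach $H$. I would first try a Gutman--Cwikel-type argument. Work in Minkowski-gauge terms, showing that the scaled sets $(1-\mu)X_{max}+\mu H$ with $\mu\in(0,1]$ are controlled invariant. Then show that each is reached from $H$ in finitely many $\Pre$ steps, by exploiting the interior ball of $H$ and compactness of $X_{max}$. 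Finally let $\mu\to 0$. If this finite-reachability step fails in general, I expect an additional stabilizability or controllability assumption to be needed, and that step is where I would expect the real difficulty of the theorem to lie.
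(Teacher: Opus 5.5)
\textbf{The convergence step is not proved.} Your first item is fine. So is your reduction of the second item to density of $\bigcup_t H_t$ in $X_{max}$: monotonicity, compactness and convexity of the $H_t$, and $H_t\subseteq X_{max}$ are all correctly argued. But that density statement is the whole content of the convergence claim, and your proposal stops there. The finite-reachability step is left as a hope. Your fallback, that extra stabilizability or controllability assumptions may be needed, is unfounded: $\epsilon>0$ already suffices.

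The margin you plan to exploit is also attached to the wrong object. A margin around points of a single trajectory cannot absorb the $O\big((1-\lambda)\,\mathrm{diam}(X)\big)$ discrepancy in one shot, as you observe yourself. An interior ball of $H$ is not, by itself, what drives the $\Pre$-iteration out to $X_{max}$. A Gutman--Cwikel-type bootstrap needs a \emph{controlled invariant} set $C$ with a uniform margin, $C\oplus\mathcal{B}_r\subseteq\Pre^k(C)$. The set $H$ itself can fail this, because $\Pre^k(H)\subseteq X$ while vertices of $H$ may lie on $\partial X$.

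\textbf{The missing idea: an inner invariant set built from the trajectory's tail.} This is how the paper uses $\epsilon>0$.
\begin{enumerate}
\item By Lemma~\ref{lem:Int_char}, $x_N=\sum_i\alpha_i x_i$ with all $\alpha_i>0$.
\item Choose $u_{N+k}=\sum_i\alpha_i u_{k+i}$ recursively. This gives an admissible extension with $x_{N+k}=\sum_i\alpha_i x_{k+i}$.
\item By induction, every $x_{N+k}$ has strictly positive barycentric coordinates with respect to $x_0,\dots,x_{N-1}$, hence lies in $\Int(H)$.
\item Set $S=\ch(x_N,\dots,x_{2N-1})$. It is controlled invariant, as in Theorem~\ref{prop:feas_inv}, since $x_{2N}\in S$. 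It also satisfies $S\subseteq\Int(H)$.
\item $H\subseteq\Pre^N(S)$: each vertex $x_i$ is driven to $x_{i+N}\in S$ through states in $H\subseteq X$, and $\Pre^N(S)$ is convex.
\item Hence $S\oplus\mathcal{B}_r\subseteq H\subseteq\Pre^N(S)$ for some $r>0$.
\end{enumerate}
The paper then cites Liu and Ozay for $\Pre^t(S)\to X_{max}$, and concludes from $\Pre^t(S)\subseteq H_t\subseteq X_{max}$.

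\textbf{Your scaled-set idea also works, once applied to $S$ instead of $H$.} Let $d=\mathrm{diam}(X)$, $G_\mu=(1-\mu)X_{max}+\mu S$, and $\mu'=\min\{1,\mu d/(d-r)\}$. Any $p=(1-\mu)a+\mu b\in G_\mu$ can be rewritten as $(1-\mu')a+\mu' b'$, where $b'=b+\tfrac{\mu'-\mu}{\mu'}(a-b)\in S\oplus\mathcal{B}_r\subseteq\Pre^N(S)$. Combining the inputs that keep $a$ in $X_{max}$ and steer $b'$ into $S$ gives $G_\mu\subseteq\Pre^N(G_{\mu'})$. Iterating, $G_\mu\subseteq\Pre^{Nm}(S)\subseteq H_{Nm}$ once $\mu\,(d/(d-r))^m\ge 1$. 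Since $d_H(G_\mu,X_{max})\le\mu d$, the convergence is in fact exponential.
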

\textbf{\underline{Proof' sketch:}}
If the terminal point lies in the interior of $H$, control laws can preserve this property, making strict feasibility recursive. Moreover, $H$ satisfies $S \subseteq H \subseteq S_N$. Since $S_N$ expands toward the maximal controlled invariant set $X_{\max}$ as $N$ grows, $H$ also converges to $X_{\max}$.
\begin{remark}
These results extend, at least partially, to perturbed and Linear parameter varying systems.
\end{remark}
\section{Application to MPC}
\label{sec:MPC}
We address recursive feasibility in MPC for constrained linear systems without precomputed terminal invariant sets. While classical schemes rely on computationally expensive terminal invariant constraints, we propose an alternative formulation based on the convex feasible trajectories introduced in Section~\ref{sec:traj}.

The standard finite-horizon MPC problem is given by
\begin{equation}
\label{eqn:MPC}
\begin{aligned}
\min_{\mathbf{u}_t^N} \quad & \sum_{k=0}^{N-1} \ell(x_{k|t}, u_{k|t}) + L(x_{N|t}) \\
\text{s.t.} \quad 
& x_{k+1|t} = A x_{k|t} + B u_{k|t}, \\
& x_{k|t} \in X,\; u_{k|t} \in U,\quad k=0,\dots,N-1, \\
& x_{N|t} \in X_f,\quad x_{0|t} = x_t
\end{aligned}
\end{equation}
where $\ell$ and $L$ are stage and terminal costs. At iteration $t$, the problem yields the optimal control sequence $\mathbf{u}_t^N$ given state $x_t$. The terminal set $X_f$ is precomputed offline.
\begin{definition} 
    Given a state $x_t \in X$ and a terminal constraint set $X_f \subseteq X$, a finite control sequence $\mathbf{u}_t^N =\{u_{0\mid t}, \dots, u_{N-1\mid t}\}$, with its associated trajectory $\mathbf{x}_t^N = \{x_{0\mid t} , \dots x_{N-1\mid t}\}$ is said to be feasible if 
    $$\begin{array}{rl}
         & x_{k \mid t} \in X, u_{k \mid t} \in U, k=0, \ldots, N-1 ; \\
& x_{N \mid t} \in X_f .
    \end{array}
    $$
    We denote by $\mathbf{U}\left(x_t, X_f\right)$ the set of all feasible control sequences $\boldsymbol{u}_t^N$ for state $x_t \in \mathbb{X}$ and terminal constraint set $X_t \subseteq X$.
\end{definition}

In particular, when the sets $U,X,X_f$ are compacts, and $\mathbf{U}\left(x_t, X_f\right) \neq \emptyset$, the optimization problem in (\ref{eqn:MPC}) admits at least one minimizer. 

An MPC scheme is recursively feasible if feasibility at $t$ ensures it for $t+1$ under the applied control. While a terminal invariant set $X_f$ guarantees this property \cite{mayne2000constrained}, its computation is often intractable. We replace this a priori constraint with the trajectory-based convex feasibility condition. Assuming convex $X$ and $U$, we consider the following problem:
\begin{equation}
\label{eqn:MPC_feas}
\begin{aligned}
\min_{\mathbf{u}_t^N} \quad & \sum_{k=0}^{N-1} \ell(x_{k|t}, u_{k|t}) + L(x_{N|t}) \\
\text{s.t.} \quad 
& x_{k+1|t} = A x_{k|t} + B u_{k|t}, \\
& x_{k|t} \in X,\; u_{k|t} \in U,\quad k=0,\dots,N-1, \\
& x_{N|t} \in \Int\big(\ch\{x_{0|t}, \dots, x_{N-1|t}\}\big), \\
& x_{0|t} = x_t.
\end{aligned}
\end{equation}
The terminal constraint enforces that the predicted terminal state lies in the interior of the convex hull of the preceding trajectory, which induces invariance without requiring an explicit terminal set.
\begin{proposition}
\label{pro:mpc}
The MPC scheme defined in \eqref{eqn:MPC_feas} is recursively feasible.
\end{proposition}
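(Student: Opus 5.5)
Recursive feasibility asks that, if \eqref{eqn:MPC_feas} is feasible at time $t$ with state $x_t$ and the first input $u_{0|t}$ is applied, then the problem is feasible again at $x_{t+1}=Ax_t+Bu_{0|t}=x_{1|t}$. Writing $x_k:=x_{k|t}$, $u_k:=u_{k|t}$ and $C_t=\ch\{x_0,\dots,x_{N-1}\}$, feasibility gives $x_N\in\Int(C_t)$. We must build a candidate sequence $\{\tilde u_0,\dots,\tilde u_{N-1}\}$ from $x_1$ whose states $\tilde x_0=x_1,\dots,\tilde x_N$ lie in $X$, whose inputs lie in $U$, and whose terminal state satisfies $\tilde x_N\in\Int\big(\ch\{\tilde x_0,\dots,\tilde x_{N-1}\}\big)$.

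\textbf{Step 1: the naive shift.} We first try the shift $\tilde u_k=u_{k+1}$ for $k=0,\dots,N-2$. This gives $\tilde x_k=x_{k+1}$ for $k\le N-1$, so $\tilde x_{N-1}=x_N$. These states lie in $X$: $x_1,\dots,x_{N-1}\in X$ by feasibility, and $x_N\in C_t\subseteq X$ because $X$ is convex. It remains to choose $\tilde u_{N-1}$.

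\textbf{Step 2: the appended input, via linearity (the idea of Theorem~\ref{prop:feas_inv}).} Since $x_N\in\Int(C_t)$, Lemma~\ref{lem:Int_char} gives weights $\lambda_0,\dots,\lambda_{N-1}>0$ with $\sum_k\lambda_k=1$ and $x_N=\sum_k\lambda_k x_k$. Set $\tilde u_{N-1}=\sum_k\lambda_k u_k$, which lies in $U$ by convexity. By linearity,
\begin{equation*}
\tilde x_N=Ax_N+B\tilde u_{N-1}=\sum_{k=0}^{N-1}\lambda_k x_{k+1}.
\end{equation*}

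\textbf{Step 3: interiority, the main obstacle.} We need $\tilde x_N\in\Int(C_{t+1})$, where $C_{t+1}=\ch\{x_1,\dots,x_N\}$. Here $\tilde x_N$ is a strictly positive combination of $x_1,\dots,x_N$, so it lies in $\ri(C_{t+1})$ by Lemma~\ref{lem:Int_char}. It therefore suffices to show $C_{t+1}$ is full-dimensional.

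Since $x_N\in\Int(C_t)$, $C_t$ has nonempty interior. I will argue that $x_0$ is superfluous, i.e.\ $C_t=\ch\{x_1,\dots,x_N\}$. From $x_N=\sum_k\lambda_kx_k$ with $\lambda_0>0$,
\begin{equation*}
x_0=\frac{x_N-\sum_{k\ge1}\lambda_kx_k}{\lambda_0},
\end{equation*}
so $x_0\in\operatorname{aff}\{x_1,\dots,x_N\}$. Hence $\af(C_{t+1})=\af(C_t)=\mathbb{R}^n$, and $C_{t+1}$ is full-dimensional. Its relative interior then equals its interior, so $\tilde x_N\in\Int(C_{t+1})$.

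The affine-hull argument is the step to verify carefully. If it failed, the fallback would be a small perturbation argument using openness of $\Int(C_t)$. With it in place, the candidate sequence is feasible at $t+1$, which proves recursive feasibility.
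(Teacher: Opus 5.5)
Your proof is correct and follows the paper's argument step for step: shift the inputs, append $\sum_k \lambda_k u_k$ with the strictly positive weights from Lemma~\ref{lem:Int_char}, use linearity to write the new terminal state as a strictly positive combination of $x_1,\dots,x_N$, and use $x_0 \in \af\{x_1,\dots,x_N\}$ (the paper's Corollary~\ref{cor:1}) so that the new hull is full-dimensional and its relative interior equals its interior. One small slip: the parenthetical ``i.e.\ $C_t=\ch\{x_1,\dots,x_N\}$'' is false in general, since $x_0$ can be an extreme point of $C_t$ not lying in $\ch\{x_1,\dots,x_N\}$; only the affine hulls coincide, and that is all your argument actually uses.
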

The above result requires an initial feasible solution. Once available, feasibility can be propagated using convex feasibility, allowing a successor solution to be constructed within the convex hull of previously computed trajectories.

Thus, recursive feasibility is ensured without explicitly computing a terminal invariant set. However, this introduces a non-convex constraint due to bilinear dependence in the convex hull condition, making the optimization problem more challenging and potentially suboptimal.

A practical remedy is to first compute a feasible trajectory with the proposed method, then extract an invariant set approximation to use as a terminal constraint in a standard MPC scheme. This hybrid approach combines reduced offline complexity with the convergence properties of classical MPC.
\section{Computing invariant sets}
\label{sec:computation}

We show how the convex feasibility conditions can be formulated as a finite-dimensional optimization problem to compute controlled invariant sets. 
We focus on the nominal case and assume that the constraint sets are convex polytopes. 
 \begin{assumption}
 \label{ass:2} The sets $X,U$  are closed convex polyhedral sets defined for some matrices $H_x,q,H_u,q_u$ of appropriate dimensions as follows :
     \begin{itemize}
     \item $X=\ch(\{v_1, v_2,\dots, v_{n_x}\}) = \{x \in \mathbb{R}^n \mid H_x x\leq q\}$
     \item  $U = \ch(\{u_0, u_1,\dots, u_{n_u}\})= \{u \in \mathbb{R}^m \mid H_u u\leq q_u\}$
     \end{itemize}
 \end{assumption}
We search for a convex feasible trajectory $x_0,\dots,x_{N-1}$.
The convex feasibility conditions can be encoded as the following optimization problem:
\begin{align}
\label{eqn:opt}
\min \quad & -d \\
\text{s.t.} \quad 
& x_{i+1} = A x_i + B u_i, \\
& x_i \in X,\; u_i \in U, \quad i=0,\dots,N-1, \\
& x_N + d s_j = \sum_{i=0}^{N-1} \lambda_{ij} x_i, \\
&\label{eqn:opt_fin} \sum_{i=0}^{N-1} \lambda_{ij} = 1,\quad \lambda_{ij} \ge 0,\quad d > 0.
\end{align}
with $s_j = (-1)^r e_{k}$ with $1\leq k \leq n , \, r \in \{0,1\}$, $j = rn + k$, $e_k$ is the basis vector. The constraints enforce the convex feasibility condition, while $d$ measures the strictness of the interior condition.   

\begin{theorem}
The optimization problem \eqref{eqn:opt}-\eqref{eqn:opt_fin} is feasible if and only if there exists a convex feasible trajectory for system $\Sigma$ under constraints $(X,U)$.
\end{theorem}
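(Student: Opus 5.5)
The plan is to prove the two directions separately. In both, the link between the optimization problem and Definition~\ref{def:feas} is that a ball around $x_N$ lies in the polytope $P=\ch\{x_0,\dots,x_{N-1}\}$ exactly when a small cross polytope around $x_N$ does. The cross polytope is spanned by the $2n$ points $x_N\pm d e_k$, i.e.\ the points $x_N + d s_j$ for $j=1,\dots,2n$. Throughout, $N>n+1$ is fixed as in the definition, and "convex feasible trajectory" means the open-loop notion. By Proposition~\ref{prop:feas_equiv}, that is equivalent to closed-loop convex feasibility, so it does not matter which notion we use.

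\emph{Feasibility of \eqref{eqn:opt}--\eqref{eqn:opt_fin} implies a convex feasible trajectory.} Take a feasible tuple $(x_i,u_i,\lambda_{ij},d)$ with $d>0$. The dynamics and the constraints $x_i\in X$, $u_i\in U$ give condition \eqref{eqn:feas1o}. The constraints on $\lambda_{\cdot j}$ say that $x_N+ds_j\in P$ for every $j=1,\dots,2n$. Since $P$ is convex, it contains $C=\ch\{x_N\pm d e_k\}$, which is the $\ell_1$-ball of radius $d$ centred at $x_N$. The $\ell_1$-ball contains the Euclidean ball of radius $d/\sqrt{n}$, because $\|z\|_1\le\sqrt n\|z\|$. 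Hence $x_N+\mathcal B_{\epsilon}(0)\subseteq P$ with $\epsilon=d/\sqrt n>0$, which is condition \eqref{eqn:feas2o}. Extending $\mathbf u$ arbitrarily in $U$ beyond time $N-1$ gives an input trajectory on $\mathbb N_{\ge0}$, so $x_0$ is open-loop convex feasible.

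\emph{A convex feasible trajectory implies feasibility.} Suppose $x_0$ is open-loop convex feasible with input $\mathbf u$, horizon $N$ and radius $\epsilon>0$. Set $x_i=\Phi(i,x_0,\mathbf u)$ and $u_i=\mathbf u(i)$; these satisfy the dynamics and the state and input constraints. Choose $d=\epsilon$. Then $x_N+ds_j\in x_N+\mathcal B_\epsilon(0)\subseteq P$ for each $j$, because $\|s_j\|=1$. So each such point is a convex combination of $x_0,\dots,x_{N-1}$, and this supplies $\lambda_{ij}\ge0$ with $\sum_i\lambda_{ij}=1$. The tuple is therefore feasible. Feasibility of an optimization problem is exactly what is claimed; whether the supremum of $d$ is attained, given the strict constraint $d>0$, does not matter.

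\emph{Main obstacle.} The delicate step is the first direction: we must pass from finitely many probe points $x_N+ds_j$ to a full Euclidean ball. This needs convexity of $P$ together with the explicit inscribed-ball radius of the cross polytope. The step fails unless the $s_j$ range over all $2n$ signed basis vectors, so I would check carefully that the indexing $j=rn+k$ with $r\in\{0,1\}$ covers both $+e_k$ and $-e_k$. A second point to check is that the horizon $N$ in the problem is taken with $N>n+1$, matching Definition~\ref{def:feas}. If the problem allows smaller $N$, the equivalence should be read with $N$ ranging over the same values on both sides. Note also that for smaller $N$ the condition cannot hold anyway: $P$ has nonempty interior only if it has at least $n+1$ affinely independent points, i.e.\ only if $N\ge n+1$.
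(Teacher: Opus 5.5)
Your proof is correct. The paper gives no written proof of this theorem, only the remark that $d$ certifies an $\ell_1$-ball of radius $d$ around $x_N$ inside $\ch\{x_0,\dots,x_{N-1}\}$; your cross-polytope argument ($\epsilon=d/\sqrt{n}$ in one direction, $d=\epsilon$ in the other) is exactly that reasoning made rigorous. The horizon mismatch you flag ($N=n+1$ versus $N>n+1$) is also harmless under the existential reading: Step~1 of the proof of Theorem~\ref{thm:stri_feas}, taking $u_N=\sum_i\alpha_i u_i$, shows that a trajectory certified at horizon $N$ is also certified at horizon $N+1$.
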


For the closed-loop case, we restrict the inputs to a linear state-feedback law $u_i = K x_i + b$, where $K \in \mathbb{R}^{m \times n}$ and $b \in \mathbb{R}^m$, subject to $K x_i + b \in U$. The prediction horizon $N$ is treated as a design parameter, typically determined by increasing its value until the problem becomes feasible.

The resulting optimization program involves $N+1$ states, $N$ inputs, and auxiliary variables $\lambda_{i,j}$ for convex combinations. The formulation is governed by linear dynamics and constraints, with bilinear terms arising from the coupling of states and convex coefficients.

The scalar $d$ serves as a proxy for the invariant set size, ensuring the set contains an $\ell_1$-ball of radius at least $d$. While this keeps the constraint count low, it may lead to conservatism in high-dimensional spaces. This can be mitigated by employing box-type terminal constraints or volume surrogates~\cite{mejari_data-driven_2023}.
\section{Numerical Examples}
\label{sec:Examples}
We validate our framework through three numerical studies: i) an uncontrollable system demonstrating the method's broad applicability; ii) a high-dimensional truck-trailer system to evaluate scalability against state-of-the-art methods~\cite{anevlavs-lui-ozay_2024}; and iii) a recursively feasible MPC design. Source code is available at: \href{https://github.com/JUNIORWAFO/Convex_Feasibility_Repo}{GitHub}.
\subsection{Example 1}
Consider the model defined by:
\[
    \begin{pmatrix}
         x^+_1  \\
         x^+_2
    \end{pmatrix} = 
    \begin{pmatrix}
         0.5 & 1 \\
         0 & -0.5
    \end{pmatrix} 
    \begin{pmatrix}
         x_1  \\
         x_2
    \end{pmatrix} + 
    \begin{pmatrix}
         1 \\
         0
    \end{pmatrix} u,
\]
with  $u \in[-1,1]$. The safe set is the same as in Example 1 from  \cite{9781450370189}. 
The system is not controllable, which makes the computation of invariant sets more challenging. We compare three approaches: the MPT toolbox, the method of \cite{anevlavs-lui-ozay_2024}, and the proposed method in this paper. The method of \cite{anevlavs-lui-ozay_2024} returns an empty set, while the MPT toolbox successfully computes the maximal invariant set. 
The proposed approach generates a non-trivial invariant set in approximately 1 second. The horizon $N=7$ was selected empirically as the smallest value ensuring feasibility of the optimization problem and yielding a non-trivial invariant set. Increasing $N$ led to marginal volume improvements at the cost of higher computational complexity.
Applying the backward reachable algorithm to this set allows recovering the maximal invariant set, with a total computation time of 2.7 seconds. 
\begin{figure}[ht]
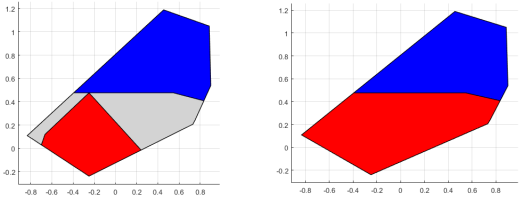

    \begin{minipage}{0.4\linewidth}
        \centering
        \includesvg[width=\linewidth]{images/compare_1.svg}
    \end{minipage}
    \begin{minipage}{0.45\linewidth}
        \centering
        \includesvg[width=\linewidth]{images/compare_2.svg}
    \end{minipage}
    \caption{Invariant set computation. 
(a) Comparison between the maximal invariant set (MPT) and the set obtained with the proposed method. 
(b) Convergence of the proposed set to the maximal invariant set via backward iteration.}
\vspace{-15pt}
    \label{fig:set_1}
\end{figure}

\subsection{Truck with M Trailers}
Consider the continuous-time system of a truck with $M$ trailers, where the state $x = [d_1, \dots, d_M, v_0, \dots, v_M]^\top$ consists of relative distances and velocities. The dynamics are governed by:
$$
\begin{aligned}
    \dot{d}_i &= v_{i-1} - v_i, \\
    \dot{v}_0 &= \frac{k_s}{m} d_1 - \frac{k_d}{m} v_0 + \frac{k_d}{m} v_1 + u, \\
    \dot{v}_i &= \frac{k_s}{m}(d_i - d_{i+1}) + \frac{k_d}{m}(v_{i-1} - 2v_i + v_{i+1}), \\
    \dot{v}_M &= \frac{k_s}{m} d_M - \frac{k_d}{m} v_M + \frac{k_d}{m} v_{M-1},
\end{aligned}
$$
for $i=1, \dots, M$. The system is discretized with sampling time $T_s$, resulting in a discrete-time linear system of dimension $n = 2M+1$. We used parameters  from \cite{9781450370189}.

We evaluate the proposed method against the MPT toolbox and the approach of~\cite{anevlavs-lui-ozay_2024} as the system dimension increases. As shown in Table~\ref{tab:comparaison}, the MPT toolbox fails to converge for $M \geq 3$ within the iteration limit (100), while~\cite{anevlavs-lui-ozay_2024} encounters numerical issues at $M \geq 3$ due to the complexity of volume computations.

In contrast to the implicit polyhedral approach of~\cite{anevlavs-lui-ozay_2024}, which relies on computationally intensive set projections, our method employs a trajectory-based formulation. By constructing invariant sets from convex combinations of feasible trajectories, we bypass explicit set propagation and ensure computational tractability via finite-dimensional optimization. While this trades geometric exactness for potential conservatism, our approach scales effectively to higher dimensions and can be refined using multiple trajectories or backward reachability.

\begin{table}[ht]
    \centering
    \resizebox{\linewidth}{!}{
    \begin{tabular}{lllll}
    \hline
         & $M=1$ &  $M=2$  & $M=3$ & $M=4$\\
         \hline
         \hline
    MPT (100 iterations)    & 0.05 & 0.66 & $>52$ & $>401$
    \\
    Volume & 21.6826 & 20.7586 & NA & NA \\
    \hline 
     Method \cite{anevlavs-lui-ozay_2024}    & 0.8 & 1.9 & 9.6 & 307.2
    \\
    Volume & 21.6826 & 18.9066 & NA & NA \\
    \hline 
     Our approach (N=12) & 1.8 & 0.8 & 2.3 & 3.14
    \\
    Volume & 5.9217 & 0.131 & $3.2\times  10 ^{-6}$ & $7\times  10 ^{-13}$ \\ 
    \hline 
     Our approach (N=12)  & 2.6 &8.76 & NA & NA \\
     Backward Fixed Point & & & &
    \\
    Volume &21.6826 & 20.7586 & NA & NA \\ 
    \hline 
     Our approach   & NC & NC & 20.6 & 151 \\
    ($12\leq N \leq 24$) & & & & \\
    Volume &NC & NC  & 0.025 & $6.52\times 10^{-5}$
    \end{tabular}}
    \caption{Comparison of computation time (s) and invariant set volume for the different methods. NA indicates cases where the computation did not return a result. NC indicate cases  where the computation was not attempted}
    \vspace{-15pt}
    \label{tab:comparaison}
\end{table}
\subsection{The Coupled Tanks}
In this section, we consider the coupled tanks system from \cite{coupled_tanks_2}, described by 
$$x^+  = A x + B u. $$
The constraints on the states and inputs are given by:
\[
X = \{x \in \mathbb{R}^4 \mid \underline{x} \leq x \leq \overline{x}\}, \quad U = \{u \in \mathbb{R}^2 \mid \underline{u} \leq u \leq \overline{u}\},
\]
\[
\begin{aligned}
& \underline{u}=-[4.53 ;5.56]\times 10^{-4}, \quad \overline{u}=[4.53 ;5.56]\times 10^{-4}, \\
& \underline{x}=[-0.45 ;-0.46 ;-0.45 ;-0.46],  \overline{x}=[0.71 ; 0.7 ; 0.65 ; 0.64].
\end{aligned}
\]
The system matrices are taken from \cite{BRIAO20219741}.
From an initial condition of $x_0 = [0.5;0.5;0.5;0.5]$, we consider the finite horizon MPC problem in (\ref{eqn:MPC_feas}) with a  cost function $$ J_N\left(x_t, \mathbf{u}_t^N\right) = \sum_{k=0}^{N} (x_{k\mid t}-x_r)^T Q (x_{k\mid t}-x_r)  + \sum_{k=0}^{N-1} u_k^T R u_k $$
where $x_r$ represents the reference trajectory and the matrices $Q$ and $R$ are given by:
$$\scriptstyle{Q=\left[\begin{array}{cccc}
1.5 & 0 & 0 & 0 \\
0 & 3.8 & 0 & 0 \\
0 & 0 & 10.1 & 0 \\
0 & 0 & 0 & 27.3
\end{array}\right], \quad R=\left[\begin{array}{cc}
1 & 0 \\
0 & 1 
\end{array}\right]}.$$

The results confirm that the scheme ensures recursive feasibility while satisfying state and input constraints. 
\begin{figure}[ht]
\centering

\begin{subfigure}{\linewidth}
    \centering
    \includesvg[width=0.8\linewidth]{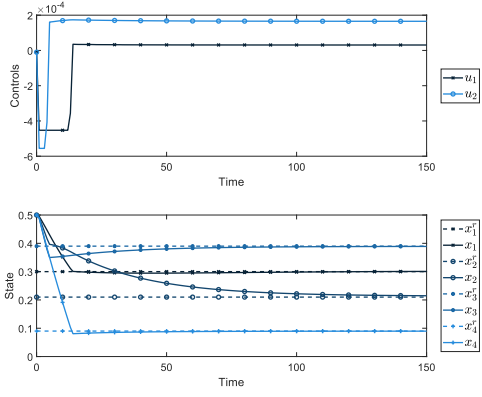}
    \caption{Evolution of the input (top) and state (bottom) of the resulting closed-loop trajectories using the proposed MPC scheme for a reference point $x_r = [0.3,0.21,0.39,0.1]$ and a horizon $N=40$.}
    \label{fig:MPC_other}
\end{subfigure}
\hfill
\begin{subfigure}{\linewidth}
    \centering
    \includesvg[width=0.8\linewidth]{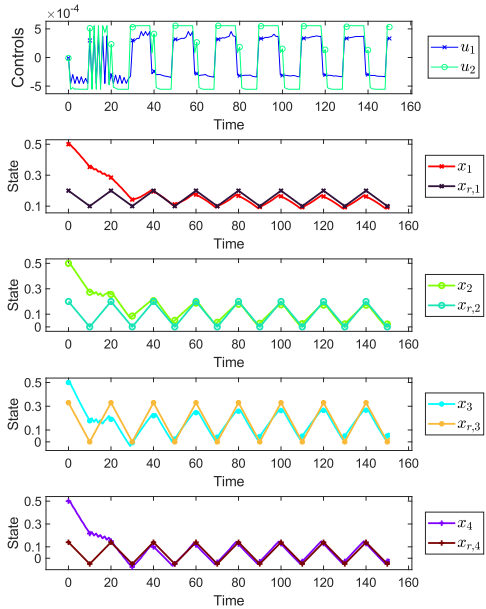}
    \caption{Evolution of the input (top) and states of the resulting closed-loop trajectories using the proposed MPC scheme for a given reference signal $x_r$ and for a horizon $N=40$.}
    \label{fig:MPC_traj}
\end{subfigure}

\caption{Closed-loop trajectories under the proposed MPC scheme for $N=40$.}
\vspace{-15pt}
\label{fig:MPC_combined}
\end{figure}

\section{Conclusion}
In this paper, we proposed a trajectory-based framework for the synthesis of controlled invariant sets for linear systems, providing an alternative to classical set-theoretic approaches. By leveraging convex feasibility of trajectories, we derived practical conditions for invariant set construction and developed a recursively feasible MPC scheme without requiring a precomputed terminal invariant set. The effectiveness of the approach was demonstrated on representative examples, highlighting its computational advantages in settings where traditional methods become intractable. 

Future work will focus on improving the computational efficiency of the proposed optimization problems and extending the framework to handle richer specifications, including convex temporal logic constraints \cite{saoud2024temporallogicresiliencedynamical}.
\section*{Acknowledgment}
AI tools (ChatGPT and Gemini) were used for proofreading only; all technical results were derived by the authors.

\ifitsdraft
\section{Auxilliary results}

\subsection{Convex Sets}
In this section we provide some definitions and characterizations of convex sets.
\begin{definition} A subset C of $\mathbb{R}^n$ is a convex set if for all $x,y \in C$, $[x,y] \subseteq C$.
\end{definition}
\begin{definition}\cite{hiriart-urruty_fundamentals_2012}
Let $S$ be a subset of  $\mathbb{R}^n$. The convex hull of S can be defined as: 
     $$\ch(S) = \{ \sum\limits_{i=1}^k \alpha_i x_i, 0 \leq \alpha_i, \sum\limits_{i=1}^k \alpha_i = 1, x_i \in S, k \in \mathbb{N}\}$$.
The closed convex hull of S is the set $\cl(\ch(S))$.\footnote{In general $\cl(\ch(S)) \neq \ch(\cl(S))$, see \cite[Chapter A, fig 1.4.1]{hiriart-urruty_fundamentals_2012}.}
\end{definition}
We also provide the definition of extreme points of a closed convex set.
\begin{definition}
    Consider a closed convex set $C \subseteq \mathbb{R}^n$. A point $p \in C$ is called an extreme point of $C$ if it does not lie between any two distinct points of $C$. That is, if there does not exist $x, y \in C$ such that $x \neq y$ and $p \in ]x,y[$. The set of all extreme points of $C$ is denoted by extreme $\ex(C)$.
\end{definition}
The following definition introduces the notions of affine closure, relative interior, and boundary.
\begin{definition}
    Let $C$ be a subset of $\mathbb{R}^n$. 
    \begin{itemize}
        \item  The Affine closure of $C$, denoted $\af(C)$is defined as follow: $$\af(C)  = \{ \sum\limits_{i=1}^{n+1} \alpha_i x_i, \sum\limits_{i=1}^{n+1} \alpha_i = 1, x_i \in C\}.$$ 
        \item  The  relative interior of a set C is defined by $$\ri C = \{x \in C \mbox{ such that } \exists \delta>0 \mid \mathcal{B}_{\delta}(x)\cap \af(C) \subseteq C\}$$ and the relative exterior $\rdelta C = \cl(C)\setminus \ri C$
    \end{itemize}
\end{definition}
\begin{corollary}
\label{cor:1}
     Consider any points $x_0, x_1, \dots, x_n \in \mathbb{R}^d$. Let $x \in \mathbb{R}^d$. If $x \in \ri(\ch(x_0, \dots x_n))$, then: $$\af(\{x_0, \dots, x_n\}) = \af(\{x, x_1, \dots, x_n\})$$
\end{corollary}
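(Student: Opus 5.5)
The plan is to use Lemma~\ref{lem:Int_char} to get a strictly positive barycentric representation of $x$, and then show that each of the two point sets lies in the affine hull of the other. Throughout, I read the sums in Lemma~\ref{lem:Int_char} as running over $i=0,\dots,n$, since the statement introduces $\lambda_0$. I also use the fact that $\af(C)$, as defined above, is the set of all affine combinations of points of $C$. Hence $\af(C)$ is the smallest affine set containing $C$, and $C\subseteq \af(C')$ implies $\af(C)\subseteq\af(C')$.

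\emph{Step 1.} Since $x \in \ri(\ch(x_0,\dots,x_n))$, Lemma~\ref{lem:Int_char} gives $\lambda_0,\dots,\lambda_n>0$ with $\sum_{i=0}^n\lambda_i=1$ and $x=\sum_{i=0}^n\lambda_i x_i$. In particular $x$ is an affine combination of $x_0,\dots,x_n$, so $x\in\af(\{x_0,\dots,x_n\})$. The points $x_1,\dots,x_n$ trivially belong to this set as well. Therefore $\{x,x_1,\dots,x_n\}\subseteq\af(\{x_0,\dots,x_n\})$, which gives $\af(\{x,x_1,\dots,x_n\})\subseteq\af(\{x_0,\dots,x_n\})$.

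\emph{Step 2.} For the reverse inclusion, I use $\lambda_0>0$ to solve for $x_0$:
\[
x_0=\frac{1}{\lambda_0}x-\sum_{i=1}^n\frac{\lambda_i}{\lambda_0}x_i .
\]
The coefficients sum to $\frac{1}{\lambda_0}\bigl(1-\sum_{i=1}^n\lambda_i\bigr)=\frac{\lambda_0}{\lambda_0}=1$. So $x_0$ is an affine combination of $x,x_1,\dots,x_n$, that is, $x_0\in\af(\{x,x_1,\dots,x_n\})$. Together with $x_1,\dots,x_n$, this yields $\{x_0,\dots,x_n\}\subseteq\af(\{x,x_1,\dots,x_n\})$ and hence the reverse inclusion.

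The only delicate point is Step 2, which needs $\lambda_0\neq 0$. This is exactly what the relative-interior hypothesis provides through the strict positivity in Lemma~\ref{lem:Int_char}. The claim fails without it: for example, $x$ could lie on a face not containing $x_0$, and then the affine hull could shrink. A minor technical check is that the definition of $\af$ uses combinations of at most $n+1$ points. By Carathéodory's theorem in affine form, this set coincides with the set of all finite affine combinations, so the minimality property invoked above holds.
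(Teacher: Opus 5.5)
Your proof is correct and follows the paper's argument: the paper also invokes Lemma~\ref{lem:Int_char} to obtain $\alpha_0>0$, solves for $x_0$ as an affine combination of $x,x_1,\dots,x_n$, and declares this sufficient. Your explicit forward inclusion (Step 1), your reading of the lemma's sums as starting at $i=0$, and your remark on the at-most-$(d+1)$-point definition of $\af$ fill in details the paper leaves implicit.
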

\label{sec:aux}
\subsection{Controlled Invariance}
In this section, we introduce a selection of auxiliary results.
\begin{lemma}
    \label{prop:rci_struc}
    Consider the system $\Sigma$ in (\ref{dis_sys}) and let $X \subseteq \mathcal{X}$ and $U \subseteq \mathcal{U}$ be the constraint sets on the states and inputs, respectively. We suppose that $X$ is a closed convex set, and $U$ is a compact convex set. If $S$ is a  controlled invariant for system $\Sigma$ and constraint set  $(X,U)$ then the  sets $\cl(S)$, $\ch(S)$ and $\cl(\ch(S))$ are also controlled invariants.
\end{lemma}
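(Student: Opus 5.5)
We prove the three claims in turn, using Definition~\ref{def:RCI} directly. The key ingredients are the linearity of $(x,u)\mapsto Ax+Bu$, the convexity of $X$ and $U$, and the compactness of $U$.

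\emph{Convex hull.} Let $x\in\ch(S)$ and write $x=\sum_{i=1}^k\alpha_i x_i$ with $x_i\in S$, $\alpha_i\ge 0$ and $\sum_i\alpha_i=1$. Since $S$ is controlled invariant, for each $i$ we pick $u_i\in U$ with $Ax_i+Bu_i\in S$, and set $u=\sum_i\alpha_i u_i$. Then $u\in U$ because $U$ is convex. By linearity,
\[
Ax+Bu=\sum_i\alpha_i(Ax_i+Bu_i)\in\ch(S).
\]
It remains to check the state constraint. We have $S\subseteq X$ and $X$ is convex, so $\ch(S)\subseteq X$. Hence $\ch(S)$ is a controlled invariant.

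\emph{Closure.} Let $x\in\cl(S)$ and take a sequence $x_k\in S$ with $x_k\to x$. For each $k$ choose $u_k\in U$ with $Ax_k+Bu_k\in S$. Since $U$ is compact, we can extract a subsequence $u_{k_j}\to u\in U$. Then
\[
Ax_{k_j}+Bu_{k_j}\to Ax+Bu,
\]
and each term of this sequence lies in $S$, so the limit satisfies $Ax+Bu\in\cl(S)$. Also $\cl(S)\subseteq X$ because $X$ is closed. Hence $\cl(S)$ is a controlled invariant. This is the only step that needs more than algebra: compactness of $U$ is exactly what guarantees that the limiting input exists and is admissible. Without it, the required inputs could escape to infinity, or approach a boundary point not contained in $U$.

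\emph{Combination.} Apply the first step to $S$, which shows that $\ch(S)$ is a controlled invariant. Then apply the second step to $\ch(S)$, which gives that $\cl(\ch(S))$ is a controlled invariant. This application is legitimate because the closure argument used only that its input set is a controlled invariant contained in $X$, and $\ch(S)$ satisfies both requirements.
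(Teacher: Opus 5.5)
Your proof is correct and follows essentially the same route as the paper. Both use a compactness/subsequence argument for $\cl(S)$, linearity with $u=\sum_i\alpha_i u_i$ for $\ch(S)$, and the composition of the two for $\cl(\ch(S))$. The differences are cosmetic: you use an arbitrary finite convex combination where the paper invokes Carath\'eodory (which is not needed), and you explicitly justify $u\in U$ by the convexity of $U$, a step the paper leaves implicit.
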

Lemma \ref{prop:rci_struc} shows that  controlled invariance is closed with respect to convex hull and closure

\begin{lemma}
\label{thm1}
Consider the system $\Sigma$ in (\ref{dis_sys}) and let $X \subseteq \mathcal{X}$ and $U \subseteq \mathcal{U}$ be the constraint sets on the states and inputs, respectively. We suppose that X and U  are compact convex sets. Then the followings holds: 
A closed convex set $S \subseteq X$ is a  controlled invariant set for system $\Sigma$ and constraint set  $(X,U)$ if and only if:
\begin{multline}
     \forall x \in \ex(S), \, \exists u \in U\mbox{ such that }  \mbox{ } Ax + B u \in S .
     \label{eqn:prop_charact}
 \end{multline}
\end{lemma}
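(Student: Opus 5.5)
Necessity is immediate from the definition of controlled invariance: if $S$ is a controlled invariant then every $x\in S$, in particular every $x\in\ex(S)$, admits some $u\in U$ with $Ax+Bu\in S$. The substance is sufficiency. I will use the linearity of the dynamics together with convexity of $U$ and of $S$, in the same spirit as the sketch of Theorem~\ref{prop:feas_inv}.

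For sufficiency I will first use compactness. Since $S\subseteq X$ is closed and $X$ is compact, $S$ is a compact convex set. The Krein--Milman theorem (Minkowski's theorem in finite dimension) then gives $S=\ch(\ex(S))$. By Carath\'eodory's theorem, every $x\in S$ can be written as a finite convex combination
\[
x=\sum_{i=1}^{k}\alpha_i p_i,\qquad p_i\in\ex(S),\ \alpha_i\ge 0,\ \sum_{i=1}^{k}\alpha_i=1,
\]
with $k\le n+1$. If $S$ is empty there is nothing to prove.

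Next, for each extreme point $p_i$, hypothesis \eqref{eqn:prop_charact} supplies $u_i\in U$ with $Ap_i+Bu_i\in S$. I will set $u=\sum_i\alpha_i u_i$, which lies in $U$ by convexity of $U$. Linearity then gives
\[
Ax+Bu=\sum_{i=1}^{k}\alpha_i\,(Ap_i+Bu_i),
\]
a convex combination of points of $S$, so it lies in $S$ by convexity. Together with $S\subseteq X$, this is exactly the condition of Definition~\ref{def:RCI}.

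I expect the main obstacle to be justifying $S=\ch(\ex(S))$. This requires $S$ to be compact, which is where the boundedness of $X$ enters. For a merely closed convex $S$ (unbounded, or containing lines) the identity fails and $\ex(S)$ may even be empty, so I will state explicitly that compactness of $X$ is what makes the argument work. Compactness of $U$ is not needed beyond convexity, since no limiting argument over inputs is involved.
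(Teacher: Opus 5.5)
Your proof is correct and follows the same route as the paper: compactness of $S$, Krein--Milman to write points of $S$ in terms of extreme points, Carath\'eodory to get a finite convex combination, and the input $u=\sum_i\alpha_i u_i$ combined through linearity and convexity of $U$ and $S$. Your use of the finite-dimensional Minkowski form $S=\ch(\ex(S))$ is slightly cleaner than the paper's $S=\cl(\ch(\ex(S)))$, which silently requires dropping the closure before Carath\'eodory can be applied.
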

 
\fi
\ifitsdraft
\section{ proofs}
\label{sec:proofs}

\textcolor{blue}{\textbf{\underline{Proof of Lemma \ref{lem:Int_char}}}}
Let $H= \ch(x_0,\dots, x_n)$.

\textbf{\underline{Necessary condition}} Suppose that $x \in \ri( H)$.  By the definition of $H$ there exists $\alpha_0, \dots, \alpha_n $ such that $\begin{pmatrix}
        x\\1
    \end{pmatrix} = \sum\limits_{i=0}^{n} \begin{pmatrix}
        \alpha_i x_i \\
        \alpha_i
    \end{pmatrix}$ and  $\alpha_i \geq 0$.
    Let  $I  = \{i \mid \alpha_i = 0\}$.
    
    If $I$ is empty, we have the property. 
    
    Suppose $I$ is not empty. Consider any $i$ in $I$, either $x = x_i$ or $x\neq x_i$. Consider the case  $x \neq x_i$. Since $x \in \ri(H)$, there exists $\epsilon > 0$ such that
$\mathcal{B}_\epsilon(x) \cap \af(H) \subset H.
$ 
Let
$L := \{x_i + t(x - x_i) : t \in \mathbb{R}\} \subset \af(H)$ 
and define $ T:= \{ t \in \mathbb{R} : x_i + t(x - x_i) \in H \}.$
Since $H$ is convex, $T$ is an interval. Moreover, since $x \in \ri(H)$, there exists an open neighborhood of  $ 1 \in T$ such that  $
x_i + t(x - x_i) \in H \quad \text{for all } t \text{ close to } 1,
$
hence $T$ contains a nonempty open interval.

Therefore, $T$ is an interval with 2 endpoints since $H$ is compact. When these endpoints are finite, they correspond to two points in $L \cap \rdelta H$, both distinct from $x$. Furthermore, since

$\mathcal{B}_\epsilon(x) \cap \af(H) \subset H$
these boundary points cannot lie in $\mathcal{B}_\epsilon(x)$, and thus each of them is at distance at least $\epsilon$ from $x$.
    One of these corresponds to $t_{i}>1$, let's denote it $z_{i} \in \rdelta( H) $. We then have $$x = \frac{1}{t_{i}} z_{i} +  (1 - \frac{1}{t_{i}}) x_i = \gamma_i z_i + (1-\gamma_i) x_i$$ with $0<\gamma_i<1$. The case $x =x_i$ correspond to $\gamma_i = 0$ So we can write $0\leq \gamma_i<1 $. Since  $z_{i} \in H$, we have the existence of  $\alpha_{i,0}, \dots, \alpha_{i,n} $ such that $\begin{pmatrix}
        z_i\\1
    \end{pmatrix} = \sum\limits_{j=0}^{n} \begin{pmatrix}
        \alpha_{i,j} x_j \\
        \alpha_{i,j}
    \end{pmatrix}$ and  $\alpha_{i,j} \geq 0$.  Let $m = \#I$.  We can write: 
    \begin{align*}
                x = & \frac{1}{m+1}\left( x + \sum_{i\in I} x\right) \\
                    = & \frac{1}{m+1}\left( x + \sum_{i\in I} \left[\gamma_i z_{i} +  (1 - \gamma_i) x_i \right]\right) \\ 
                    = & \frac{1}{m+1}\left( \sum\limits_{j=0}^{n}
        \alpha_{j} x_j + \sum_{i\in I} \gamma_i\sum\limits_{j=0}^{n}
        \alpha_{i,j} x_j +   \sum_{i\in I} (1 - \gamma_i) x_i \right) \\
        = & \frac{1}{m+1}\left( \sum\limits_{j=0}^{n}
        \alpha_{j} x_j + \sum\limits_{j=0}^{n} \sum_{i\in I} \gamma_i
        \alpha_{i,j} x_j +   \sum_{i\in I} (1 - \gamma_i) x_i \right) \\ 
        = & \frac{1}{m+1}\left( \sum\limits_{j=0}^{n}\left[
        \alpha_{j}+ \sum_{i\in I} \gamma_i
        \alpha_{i,j} \right] x_j +   \sum_{i\in I} (1 - \gamma_i) x_i \right) \\
        = & \frac{1}{m+1}\left( \sum\limits_{i=0}^{n} \alpha^{1}_i x_i +   \sum_{i\in I} (1 - \gamma_i) x_i \right) \\
        = & \sum\limits_{i=0}^{n} \lambda_i x_i
    \end{align*}
    with $\alpha^{1}_i = \alpha_{i}+ \sum_{j\in I} \gamma_j
        \alpha_{j,i}$, $\lambda_i = \frac{\alpha^{1}_i}{m+1}$ if $i \notin I$ and   $\lambda_i = \frac{\alpha^{1}_i + \left(1-\gamma_i\right)}{m+1}$ if $i\in I$.  
    From the definition of $\alpha_i$, $\alpha_{i,j}$, $\gamma_i$ and $I$, one can get that $\lambda_i > 0$ for all $0 \leq i \leq n$.  We have 
    \begin{align*}
        \sum\limits_{i=0}^{n} \lambda_i= & \sum\limits_{i\in I} \lambda_i + \sum\limits_{i\notin I} \lambda_i \\
        = &  \sum\limits_{i\in I} \frac{\alpha^{1}_i + \left(1-\gamma_i\right)}{m+1} + \sum\limits_{i\notin I} \frac{\alpha^{1}_i}{m+1} \\
        = & \frac{\sum\limits_{i\in I} [\alpha^{1}_i + \left(1-\gamma_i\right)] + \sum\limits_{i\notin I} \alpha^{1}_i}{m+1} \\
        =& \frac{\sum\limits_{i\in I} \left(1-\gamma_i\right) + \sum\limits_{i=1}^n  \alpha^{1}_i}{m+1} \\ 
        = & \frac{\sum\limits_{i\in I} \left(1-\gamma_i\right) + \sum\limits_{i=0}^n  [ \alpha_{i}+ \sum\limits_{j\in I} \gamma_j
        \alpha_{j,i}}{m+1} ]\\
        = & \frac{\sum\limits_{i\in I} \left(1-\gamma_i\right) + \sum\limits_{i=0}^n   \alpha_{i}+ \sum\limits_{i=0}^n\sum\limits_{j\in I} \gamma_j
        \alpha_{j,i}}{m+1} \\
        = & \frac{ 1+  \sum\limits_{i\in I} \left(1-\gamma_i\right) + \sum\limits_{j\in I} \left(\sum\limits_{i=0}^n 
        \alpha_{j,i}\right) \gamma_j}{m+1}\\
     = &  \frac{ 1+  \sum\limits_{i\in I} \left(1-\gamma_i\right) + \sum\limits_{j\in I} \gamma_j}{m+1} =   \frac{ 1+  \sum\limits_{i\in I} [\left(1-\gamma_i\right) + \gamma_i]}{m+1} \\
     =& \frac{ 1+  \#I}{m+1} = 1 
    \end{align*}
    We get the desired result. 

 \textbf{\underline{Sufficient condition}} Consider any $x \in H$.  Suppose that there exists $\lambda_0,\dots,\lambda_n$ such that: $\left( \begin{array}{c}
            x \\
            1 
          \end{array} \right)= \sum\limits_{i=0}^n \lambda_i \left( \begin{array}{c}
            x_i \\
            1 
          \end{array} \right) , \lambda_i>0$ and $x \in \rdelta(H)$. Then there exists a $z \in H\setminus\{x\}$ such that the line $L_z: t \mapsto tx +(1-t)z$ intersects the relative boundary in only one other point other than $x$. In other word for all $t>1$, $ tx +(1-t)z \notin H$. We have: 
          \begin{equation*}
              tx +(1-t)z =  \sum\limits_{i=0}^n t\lambda_i x_i +  \sum\limits_{i=0}^n (1-t) \alpha_i x_i=  \sum\limits_{i=0}^n \lambda_{t,i} x_i
          \end{equation*}
          with $\lambda_{t,i} = t\lambda_i + (1-t) \alpha_i$, $\sum\limits_{i=0}^n \alpha_i = 1, \,  \alpha_i \geq 0$. We have 
          $$\sum\limits_{i=0}^n \lambda_{t,i} = t \sum\limits_{i=0}^n \lambda_{i}  +(1-t) \sum\limits_{i=0}^n \alpha_{i}= 1$$
          if for all $i$ , $0\leq \lambda_{t,i} \leq 1$ then $ tx +(1-t)z \in H$. 
          
          For all $i$, we have :
          \begin{align*}
              0\leq \lambda_{t,i} \leq 1 \Longleftrightarrow & 0\leq t\lambda_{i} + (1-t) \alpha_{i} \leq 1 \\
              \Longleftrightarrow &  -\alpha_i \leq t (\lambda_i - \alpha_i) \leq 1 - \alpha_{i}.
          \end{align*} 
        We have 3 cases: 
        \begin{itemize}
            \item $(\lambda_i - \alpha_i) > 0$ which yields,  
            $-\frac{\alpha_i}{\lambda_i - \alpha_i} \leq t \leq \frac{1-\alpha_i}{\lambda_i - \alpha_i}$
            \item $(\lambda_i - \alpha_i) < 0$ which yields: 
            $\frac{1-\alpha_i}{\lambda_i - \alpha_i} \leq t \leq \frac{-\alpha_i}{\lambda_i - \alpha_i}$
            \item $(\lambda_i - \alpha_i) =0$ which yields $t\in \mathbb{R}$.
        \end{itemize}
        Let $I_1 =  \{i\mid (\lambda_i - \alpha_i) > 0\}$ ,  $I_2 = \{i \mid (\lambda_i - \alpha_i) < 0\}$ and $I_3 = \{i \mid (\lambda_i - \alpha_i) = 0\} $. Using  the fact that  $z \neq x$ then  $\#I_1 + \# I_3>1$, and  that $\frac{1-\alpha_i}{\lambda_i - \alpha_i} > 1$ when $i \in  I_1$ , $\frac{-\alpha_i}{\lambda_i - \alpha_i} > 1$ when $i \in  I_2$, we  have  that :
        $$t_0 = \min(\min_{i \in I_1}(\frac{1-\alpha_i}{\lambda_i - \alpha_i}),\min_{i \in I_2}(\frac{-\alpha_i}{\lambda_i - \alpha_i}))>1$$
          
          Taking $t_1 = \frac{t_0+1}{2}> 1$, we get $z = t_1 x + (1-t_1) z \in H $. Which is in contradiction for all $t>1, t x + (1-t) z \notin H$.  
   In conclusion $x \in \ri(H)$
\medskip

\textcolor{blue}{\textbf{\underline{Proof of Theorem \ref{prop:feas_inv}:}}}
   
We will give proof for each item separately.
    \begin{enumerate}
    \item[(i)] Let $x \in \cl(\ch( \{\Phi(t,x_0,\mathbf{u})), 0\leq\, t \leq \,N-1\}) =S$. From  the definition of convex hull we have the existence of $\alpha_0, \dots, \alpha_{N-1}$,  such that for all $0 \leq i \leq N-1$, $0\leq \alpha_i$, $\sum\limits_{i=0}^{N-1} \alpha_i =1$,  and  $x = \sum\limits_{i=0}^{N-1} \alpha_i \Phi(i,x_0,\mathbf{u})$. For all $1 \leq i \leq N-1$, let us defines $x_i = : \Phi(i,x_0,\mathbf{u})$. Let $u = : \sum\limits_{i=0}^{N-1} \alpha_i \mathbf{u}(i)$. Let us defines $u_i =: \mathbf{u}(i)$ for all  $0\leq i \leq N$. We then have:
        \begin{equation}
            \begin{array}{rl}
                x^+ = & A x +B u   
                 =  \sum\limits_{i=0}^{N-1} \alpha_i \left[A x_i + B u_i \right] \\
                =&  \sum\limits_{i=0}^{N-1} \alpha_i \Phi(i+1,x_0,\mathbf{u})  
                \\
                = &\sum\limits_{i=0}^{N-2} \alpha_i x_{i+1} +  \alpha_{N-1}x \\ 
                = &\sum\limits_{i=0}^{N-2} \alpha_i x_{i+1} +  \alpha_{N-1}\sum\limits_{i=0}^{N- 1} \alpha_i x_{i} \\
                =& \sum\limits_{i=0}^{N- 1} \beta_i x_{i} \in S
            \end{array}
        \end{equation}
 where the last inclusion follows from (\ref{eqn:feas2o}) , the convexity of $S$ and  the fact that $\beta_0 = \alpha_{N-1}\alpha_0 $, $\beta_i = \alpha_{N-1}\alpha_i+ \alpha_{i-1}$. Hence, the set $S$ is then a  controlled invariant. 
         \item[(ii)] Let $x \in  \ch( \{\Phi_{\kappa}(t,x_0), 0\leq\, t \leq \,N-1\}) =S$. From the definition of convex hull, we have the existence of $\alpha_0, \dots, \alpha_{N-1}$ such that for all $0 \leq i \leq N-1 $, $0\leq \alpha_i$, $\sum\limits_{i=0}^{N-1} \alpha_i =1$ and  $x = \sum\limits_{i=0}^{N-1} \alpha_i \Phi_{\kappa}(i,x_0)$.  For all $1\leq i \leq N-1$ and $x_i = \Phi_{\kappa}(i,x_0)$. Let $u = \sum\limits_{i=1}^{n+1} \alpha_i \kappa(x_i)$. We have that:
        \begin{equation}
            \begin{array}{rl}
                x^+ = & A x +B u  
                 =  \sum\limits_{i=0}^{N-1} \alpha_i \left[Ax_i + B \kappa(x_i) \right] \\
                 =  & \sum\limits_{i=0}^{N-1} \alpha_i \Phi_{\kappa}(i+1,x_0) \\ 
                  = & \sum\limits_{i=0}^{N-2} \alpha_i x_{i+1} + \alpha_{N-1} x = \sum\limits_{i=0}^{N- 1} \beta_i x_{i} \in S
            \end{array}
        \end{equation}
 where the last inclusion follows from (\ref{eqn:feas2c}), the convexity of S and  that $\beta_0 = \alpha_{N-1} \alpha_0$ and  $\beta_i = \alpha_{i-1} + \alpha_{N} \alpha_i$. Hence, the set $S$ is then a controlled invariant.
     \end{enumerate}     
\medskip

\ifitsdraft

\textcolor{blue}{\textbf{\underline{Proof of Proposition \ref{prop:feas_equiv}:}}}

 \textbf{Sufficient Condition:} Suppose that $x_0$ is open-loop convex feasible for constraint set $(X,U)$. Then we have the existence of $\mathbf{u}:\mathbb{N}_{\geq 0} \rightarrow U$ and $N>0$ such that  (\ref{eqn:feas1o}) and (\ref{eqn:feas2o}) are satisfied. 
    We define the controller $\kappa:X \rightarrow U$ as follows: \begin{equation}  \label{eqn:controller_design}
        \kappa(x) = \begin{cases}
        \mathbf{u}(t) \mbox{ if } x = \Phi(t,x_0,\mathbf{u})\mid 0 \leq t \leq N-1\\ 
        \Single(U)
    \end{cases}
    \end{equation}
    Let us first show that $\Phi_{\kappa}(t, x_0) = \Phi(t, x_0,\mathbf{u})$ for $0\leq t \leq N$ by induction. Initially for $t=0$, $ x_0 = \Phi_{\kappa}(t, x_0) = \Phi(t, x_0,\mathbf{u})$. Consider any $0\leq t < N$ such that $\Phi_{\kappa}(t, x_0) = \Phi(t, x_0,\mathbf{u})$. We then  have: $$\Phi_{\kappa}(t+1, x_0) = A \Phi_{\kappa}(t, x_0)+ B \kappa(\Phi_{\kappa}(t, x_0))
    .$$ By the induction hypothesis $\Phi_{\kappa}(t, x_0) = \Phi(t, x_0,\mathbf{u})$. Using the definition of $\kappa$, we get that $\kappa(\Phi_{\kappa}(t, x_0)) = \mathbf{u}(t)$. Hence, one gets that $\Phi_{\kappa}(t+1, x_0) = \Phi(t+1, x_0,\mathbf{u})$. To conclude,  \begin{equation} \label{eqn:equi_clo_op_1}\Phi_{\kappa}(t, x_0) = \Phi(t, x_0,\mathbf{u})
    \end{equation} for all $0 \leq t \leq N$.

    Since $x_0$ is open-loop convex feasible, it follows that : 
    \begin{align*}
         \Phi_{\kappa}(t, x_0)& = \Phi(t, x_0,\mathbf{u}) \in X \mbox{ for } 0\leq t \leq N-1\\
       \mbox{ and }\Phi_{\kappa}(N, x_0,) & = \Phi(N, x_0,\mathbf{u})\\
        & \subseteq \ch\left(\{\Phi(t, x_0,\mathbf{u})\mid 0\leq t \leq N-1 \}\right) \\ 
        & \subseteq \ch\left(\{\Phi_{\kappa}(t, x_0)\mid 0\leq t \leq N-1 \}\right)
     \end{align*}
    where the first line is the consequence of condition (\ref{eqn:feas1c}), and the second and third inclusions come from condition (\ref{eqn:feas2c}) and (\ref{eqn:equi_clo_op_1}).  In conclusion $x_0$ is open-loop convex feasible for constraint set $(X,U)$.
   
    \textbf{Necessary Condition:} In that case starting from the controller $\kappa$ we can construct the input trajectory $\mathbf{u}:\mathbb{N} \rightarrow U$ as follows: for $0\leq t \leq N- 1$, $\mathbf{u}(t) = \kappa(\Phi_{\kappa}(t, x_0))$ and for $t \geq N$ ,  $\mathbf{u}(t) = \Single(U)$. The input trajectory $\mathbf{u}$ is well defined since $\Phi_{\kappa}(t, x_0)$ are singletons. We can show by induction that for all $0 \leq t \leq N$, \begin{equation}
    \label{eqn:equi_clo_op}
        \Phi_{\kappa}(t, x_0) = \Phi(t,x_0,\mathbf{u}).
        \end{equation}
    The proof of (\ref{eqn:equi_clo_op}) is similar to the one for the sufficient conditions and thus omitted. Since $x_0$ is closed-loop convex feasible, it follows that : 
    \begin{align*}
         \Phi(t, x_0,\mathbf{u})& = \Phi_{\kappa}(t, x_0) \in X \mbox{ for } 0\leq t \leq N-1\\
       \mbox{ and } \Phi(N, x_0,\mathbf{u}) & = \Phi_{\kappa}(N, x_0,  )\\
        & \subseteq \ch\left(\{\Phi_{\kappa}(t, x_0)\mid 0\leq t \leq N-1 \}\right) \\ 
        & \subseteq  \ch\left(\{\Phi(t, x_0,\mathbf{u})\mid 0\leq t \leq N-1 \}\right)
     \end{align*}
    where the first line is the consequence of condition (\ref{eqn:feas1c}), and the second and third inclusions come from condition (\ref{eqn:feas2c}) and (\ref{eqn:equi_clo_op}).  In conclusion $x_0$ is open-loop convex feasible for constraint set $(X,U)$.

\medskip
\fi 
\textcolor{blue}{\textbf{\underline{Proof of Theorem \ref{thm:stri_feas}}}}

We prove each statement separately. Since open-loop and closed-loop convex feasibility are equivalent, we restrict attention to the open-loop case.

\begin{itemize}

\item[1.] 
If $x_0 \in X$ is open-loop convex feasible, then it is also closed-loop convex feasible. By Theorem~\ref{prop:feas_inv}, the set $H$ is controlled invariant for $\Sigma$ under the constraints $(X,U)$.

\medskip

\item[2.] 
We show that convex feasibility is recursive and use this to construct a controlled invariant set $S$ such that
\[
S \subseteq H \subseteq S_N,
\]
where $S_k$ denotes the $k$-step backward reachable set of $S$.

\medskip

\noindent\emph{Step 1: One-step propagation.}
Let $x = \Phi(N,x_0,\mathbf{u})$. Since $x \in \Int(H) = \ri(H)$, by Lemma~\ref{lem:Int_char}, there exist $\alpha_i > 0$ such that
\[
x = \sum_{i=0}^{N-1} \alpha_i x_i, 
\qquad 
\sum_{i=0}^{N-1} \alpha_i = 1,
\quad x_i := \Phi(i,x_0,\mathbf{u}).
\]

Define $u := \sum_{i=0}^{N-1} \alpha_i u_i$, where $u_i := \mathbf{u}(i)$. Then
\[
\begin{aligned}
A x + B u
&= \sum_{i=0}^{N-1} \alpha_i (A x_i + B u_i) \\
&= \sum_{i=0}^{N-1} \alpha_i x_{i+1} = \sum_{i=0}^{N-2} \alpha_i x_{i+1} + \alpha_{N-1} x.
\end{aligned}
\]

Substituting the expression of $x$, we obtain
\[
A x + B u = \sum_{i=0}^{N-1} \alpha'_i x_i,
\]
where
\[
\alpha'_0 = \alpha_{N-1}\alpha_0,
\quad
\alpha'_i = \alpha_{i-1} + \alpha_{N-1}\alpha_i, \quad 1 \le i \le N-1.
\]

Since $\alpha_i > 0$, we have $\alpha'_i > 0$ and $\sum \alpha'_i = 1$, hence
\[
A x + B u \in \ri\big(\ch(x_1,\dots,x_N)\big) \subseteq \Int(H).
\]

\medskip

\noindent\emph{Step 2: Recursive construction.}
Define a control sequence $\mathbf{u}_1$ by
\[
\mathbf{u}_1(t) =
\begin{cases}
\mathbf{u}(t), & 0 \le t \le N-1, \\
\sum_{i=0}^{N-1} \alpha_i \mathbf{u}(i), & t = N.
\end{cases}
\]

Then
\[\begin{array}{ll}
     \Phi(N+1,x_0,\mathbf{u}_1)
\in & \ri\!\left(\ch(\Phi(1,x_0,\mathbf{u}_1),\dots,\Phi(N,x_0,\mathbf{u}_1))\right) \\
         \Phi(N+1,x_0,\mathbf{u}_1)
\in  & \Int(H).
\end{array}
\]
Iterating this construction, one shows by induction that for all $t \ge 1$, there exists $\mathbf{u}_t$ such that
\[\begin{array}{rl}
\Phi(t+N,x_0,\mathbf{u}_t) \in &  \Int(H),
\\
\Phi(t+N,x_0,\mathbf{u}_t)
\in & \ri\!\big(\ch(\Phi(t,x_0,\mathbf{u}_t),\dots,\\
&\Phi(t+N-1,x_0,\mathbf{u}_t))\big).
\end{array}
\]

\medskip

\noindent\emph{Step 3: Construction of an invariant set.}
For $t = N$, define
\[
S := \ch\big(\{\Phi(i,x_0,\mathbf{u}_N) \mid N \le i \le 2N-1\}\big).
\]

Since $\Phi(i,x_0,\mathbf{u}_N) \in \Int(H)$ for all $N \le i \le 2N-1$, we have
\[
S \subseteq \Int(H) \subseteq H.
\]

Using the same argument as in Theorem~\ref{prop:feas_inv}, $S$ is controlled invariant.

\medskip

\noindent\emph{Step 4: Convergence argument.}
Let $S_k$ denote the $k$-step backward reachable sets of $S$. Then
\[
S \subseteq \Int(H) \subseteq \Int(S_N).
\]

By \cite[Theorem~1]{liu2023convergencebackwardreachablesets}, the sequence $(S_k)$ converges exponentially fast (in Hausdorff distance) to the maximal RCI set $X_{\max}$.

Since $S \subseteq H \subseteq S_N$, the same convergence property holds for $H$.

\end{itemize}

\textcolor{blue}{\textbf{\underline{Proof of Lemma \ref{prop:rci_struc}:}}} \\
    Firstly, we show that $\cl(S)$ is an invariant. This can be done by approaching any element of the closure by a sequence of elements of $S$.  
    Since $X$ is closed, $\cl(S) \subseteq X$.  Consider $L>0$ such that $\|A\|\leq L$ and $\|B\|\leq L$.  Consider any $x\in \cl(S)$, then there exists $(x_t)_{t \in \mathbb{N}_{\geq 0}}$ such that $x_t \underset{S}{\rightarrow} x$. Since $S$ is a  controlled invariant, using Definition  \ref{def:RCI},  for all $t \geq 0$, we have the existence of  $u_t \in U$ such that , \begin{equation}
            \label{eqn:seq_rci}
                A x_t + B u_t ~ \in S. 
            \end{equation} 
            Since $U$ is compact, the sequence $u_t$  admit a convergent subsequence. Hence, there exists $u \in U$ and a strictly increasing sequence $t_k \in \mathbb{N}$ such that $u_{t_k} \rightarrow u$ . In the  following  we relabel $u_{t_k}$ to  $u_t$.  Let $ x_t^+ = A x_t + B u_t$ and  $x^+ = A x + B u$. We then have for all $t \in \mathbb{N}$: 
            \begin{equation}
                \begin{array}{rl}
                    \|x^+ - x_t^+\| =&  \|A x + B u - \\ 
                    & (A x_t + B u_t)\|\\
                    \leq  & \|A (x - x_t) + B (u-u_t)\| \\
                    \leq & \|A (x - x_t) \| + \|B (u-u_t)\|\\
                    \leq  & L(\|x - x_t\|+\|u-u_t\|).
                \end{array}
            \end{equation} where the second inequality comes from the application of the triangular inequality and  the third inequality comes from the definition of matrix norm. Since $\|x - x_t\|+\|u-u_t\| \rightarrow 0$, we have $\|x_t^+ - x^+\| \rightarrow 0 $. Finally, using (\ref{eqn:seq_rci}), one gets that  $x^+=A x + B u\in \cl(S)$. Then $\cl(S)$ is also a  controlled invariant.\\        
    Secondly, we show that $\ch(S)$ is a  controlled invariant. Since $X$ is convex, $\ch(S) \subseteq X$.  Consider any $x\in \ch(S)$. Using Carathéodory theorem \cite{caratheodory_uber_1911}, there exists $x_1, x_2, \dots, x_{n+1} \in S$ and $\alpha_1, \alpha_2, \dots , \alpha_{n+1} \geq 0$ such that $ \sum\limits_{i=1}^{n+1} \alpha_i = 1 $  and  $\sum\limits_{i=1}^{n+1} \alpha_i x_i= x $. Since $S$ is a  controlled invariant, 
     there exists $u_1, u_2, \dots, u _{n+1}$ such that $A x_i + B u_i ~ \in S$. Let $u = \sum\limits_{i=1}^{n+1} \alpha_i u_i$. We have:
     \begin{align*}
        A x + B u  &=A\sum\limits_{i=1}^{n+1} \alpha_i x_i + B \sum\limits_{i=1}^{n+1} \alpha_i u_i\\ =& \sum\limits_{i=1}^{n+1} \alpha_i (A x_i + B u_i)  \in ch(S). 
     \end{align*}
     where the inclusion follows directly from the definition of the convex hull.
Hence, $\ch(S)$ is a  controlled invariant.

Finally, using the previous results, $\cl(\ch(S)) \subseteq X$ is also a  controlled invariant.
\bigskip

\textcolor{blue}{\textbf{\underline{Proof of Lemma \ref{thm1}:}}}

\textbf{Sufficient condition:} Since S is a  controlled invariant for constraint set $(X, U)$, then for $x \in \ex(S) \subseteq S$, there exists $u \in U$ such that for, $A x + B u \in S$. Hence condition (\ref{eqn:prop_charact}) holds.

 \textbf{Necessary condition:} Suppose that condition (\ref{eqn:prop_charact}) holds. Let us show that condition (\ref{eqn:rci_charac}) holds. Since $S$ is a compact convex set, from the Krein-Millman theorem \cite{rudin_functional_1991}, $S = \cl(\ch(\ex(S))$. Consider any $x \in S$. Using the Carathéodory theorem, we have the existence of $\alpha_1, \dots,\alpha_{n+1}$,  $x_1, \dots,$ $ x_{n+1}$ such that for all $1 \leq i \leq n+1$, $0\leq \alpha_i \leq 1$, $x_i \in \ex(S)$, $\sum\limits_{i=1}^{n+1} \alpha_i =1$  and $x = \sum\limits_{i=1}^{n+1} \alpha_i x_i$. Since condition (\ref{eqn:prop_charact}) holds, we have the existence of $u_1, \dots, u_{n+1}$ such that, for all $0\leq i \leq n+1$
    \begin{equation}
        \label{eqn:ex_rci}
        Ax_i + B u_i \in S.
    \end{equation} Let $u = \sum\limits_{i=1}^{n+1} \alpha_i u_i \in U$. We have  
    \begin{equation}
        \begin{array}{rl}
             x^+ =& Ax + B u   \\
             =& \sum\limits_{i=1}^{n+1}\alpha_i\left[A  x_i + B  u_i \right] \, \in S
        \end{array}
    \end{equation}
    where the last line uses (\ref{eqn:ex_rci}) and the fact that $S$ is a convex set. Hence we conclude that $S$ is a  controlled invariant for constraint set $(X,U)$.
\medskip

\textcolor{blue}{\textbf{\underline{Proof of Corollary \ref{cor:1}:}}}
Let $x \in \ri(\ch(x_0, \dots, x_n))$.  To show equality of the affine closure  it's sufficient  to show that  $x_0 \in \af(\{x, x_1, \dots, x_n\})$.  Since  $x \in \ri(\ch(x_0, \dots, x_n))$, using Lemma \ref{lem:Int_char}, we can write  $x = \sum\limits_{i=0}^{n} \alpha_i x_i$ with $\alpha_i>0$ and $\sum\limits_{i=0}^{n} \alpha_i = 1$. Substituting  $x_0$  we get: 
$$x_0 = \frac{x}{\alpha_0} - \sum\limits_{i=1}^{n} \frac{\alpha_i}{\alpha_0} x_i.$$ Using the fact that  $ \frac{1}{\alpha_0} - \sum\limits_{i=1}^{n} \frac{\alpha_i}{\alpha_0} = \frac{1 - \sum\limits_{i=1}^{n} \alpha_i}{\alpha_0} =1$ ,  One gets  that $x_0 \in \af(\{x, x_1, \dots, x_n\})$. 
\fi 
\medskip
\ifitsdraft

\textcolor{blue}{\textbf{\underline{Proof of Theorem \ref{pro:mpc}:}}}

    Suppose that problem~\eqref{eqn:MPC_feas} is feasible at time $t$. Then there exists an optimal input sequence
\[
\mathbf{u}_t^N = \{u^*_{0\mid t}, \dots, u^*_{N-1\mid t}\}
\]
with associated state trajectory
\[
\mathbf{x}_t^N = \{x_{0\mid t}, \dots, x_{N\mid t}\}
\]
such that
\[
\begin{aligned}
& x_{k \mid t} \in X,\quad u_{k \mid t} \in U, \quad k = 0,\dots,N-1, \\
& x_{N \mid t} \in \Int\!\big(\ch(x_{0\mid t}, \dots, x_{N-1\mid t})\big).
\end{aligned}
\]

Applying $u^*_{0\mid t}$ yields
\[
x_{t+1} = x_{1\mid t} = A x_t + B u^*_{0\mid t}.
\]

We show feasibility at time $t+1$ by constructing a feasible candidate.

\medskip

Since 
\[
x_{N \mid t} \in \Int\!\big(\ch(x_{0\mid t}, \dots, x_{N-1\mid t})\big),
\]
there exist coefficients $\lambda_i > 0$, $i=0,\dots,N-1$, such that
\[
\sum_{i=0}^{N-1} \lambda_i = 1,
\qquad
x_{N \mid t} = \sum_{i=0}^{N-1} \lambda_i x_{i\mid t}.
\]

Define the shifted input sequence
\[
\mathbf{u}_{t+1}^N = \{u_{0\mid t+1}, \dots, u_{N-1\mid t+1}\}
\]
by
\[
u_{i\mid t+1} =
\begin{cases}
u^*_{i+1\mid t}, & 0 \le i \le N-2, \\
\sum_{i=0}^{N-1} \lambda_i u^*_{i\mid t}, & i = N-1.
\end{cases}
\]

Let $\mathbf{x}_{t+1}^N = \{x_{0\mid t+1}, \dots, x_{N\mid t+1}\}$ be the associated trajectory. Then
\[
x_{i\mid t+1} = x_{i+1\mid t}, \quad 0 \le i \le N-1,
\]
and
\[
\begin{aligned}
x_{N\mid t+1}
&= A x_{N-1\mid t+1} + B u_{N-1\mid t+1} \\
&= A x_{N\mid t} + B \sum_{i=0}^{N-1} \lambda_i u^*_{i\mid t} \\
&= \sum_{i=0}^{N-1} \lambda_i (A x_{i\mid t} + B u^*_{i\mid t}) \\
&= \sum_{i=0}^{N-1} \lambda_i x_{i+1\mid t} = \sum_{i=0}^{N-1} \lambda_i x_{i\mid t+1}.
\end{aligned}
\]

Thus,
\[
x_{N\mid t+1} \in \ch(x_{0\mid t+1}, \dots, x_{N-1\mid t+1}).
\]

Moreover, since $\lambda_i > 0$, Lemma~\ref{lem:Int_char} and Corollary \ref{cor:1} implies
\[
x_{N\mid t+1} \in \Int\!\big(\ch(x_{0\mid t+1}, \dots, x_{N-1\mid t+1})\big).
\]

Finally, since $\ch(x_{0\mid t}, \dots, x_{N-1\mid t}) \subseteq X$, we have
\[
x_{i\mid t+1} \in X, \quad i = 0,\dots,N-1,
\]
and clearly $u_{i\mid t+1} \in U$.

\medskip

Therefore, $\mathbf{u}_{t+1}^N$ is a feasible input sequence at time $t+1$, and problem~\eqref{eqn:MPC_feas} is recursively feasible.

\fi


\begin{thebibliography}{10}

\bibitem{blanchini_set-theoretic_2008}
F.~Blanchini and S.~Miani, {\em Set-theoretic methods in control}.
\newblock -: Springer, 2008.

\bibitem{kolmanovsky_gilbert_1998}
I.~Kolmanovsky and E.~G. Gilbert, ``Theory and computation of disturbance invariant sets for discrete-time linear systems,'' {\em Mathematical Problems in Engineering}, vol.~4, pp.~317--367, 1998.

\bibitem{saoud2024temporallogicresiliencedynamical}
A.~Saoud, P.~Jagtap, and S.~Soudjani, ``Temporal logic resilience for dynamical systems,'' {\em IEEE Transactions on Automatic Control}, 2026.

\bibitem{liu2023convergencebackwardreachablesets}
Z.~Liu and N.~Ozay, ``On the convergence of the backward reachable sets of robust controlled invariant sets for discrete-time linear systems,'' in {\em Proc. IEEE 61st Conf. Decision and Control (CDC)}, pp.~4270--4275, 2022.

\bibitem{ipopt}
A.~Wächter and L.~T. Biegler, ``On the implementation of an interior-point filter line-search algorithm for large-scale nonlinear programming,'' {\em Mathematical Programming}, vol. 106, no. 1, pp. 25--57, 2006.

\bibitem{9781450370189}
T.~Anevlavis and P.~Tabuada, ``A simple hierarchy for computing controlled invariant sets,'' in {\em Proc. 23rd ACM Int. Conf. Hybrid Systems: Computation and Control (HSCC)}, 2020.

\bibitem{Chenyuan_tahir_2019}
C.~Liu, F.~Tahir, and I.~Jaimoukha, ``Full‐complexity polytopic robust control invariant sets for uncertain linear discrete‐time systems,'' {\em International Journal of Robust and Nonlinear Control}, vol.~29, 04 2019.

\bibitem{BRIAO20219741}
S.~L. Brião, E.~B. Castelan, E.~Camponogara, and J.~G. Ernesto, ``Output feedback design for discrete-time constrained systems subject to persistent disturbances via bilinear programming,'' {\em Journal of the Franklin Institute}, vol.~358, no.~18, pp.~9741--9770, 2021.

\bibitem{anevlavs-lui-ozay_2024}
T.~Anevlavis, Z.~Liu, N.~Ozay, and P.~Tabuada, ``Controlled invariant sets: Implicit closed-form representations and applications,'' {\em IEEE Transactions on Automatic Control}, vol.~69, no.~7, pp.~4506--4521, 2024.
\bibitem{7862214}
M.~Rungger and P.~Tabuada, ``Computing robust controlled invariant sets of linear systems,'' {\em IEEE Transactions on Automatic Control}, vol.~62, no.~7, pp.~3665--3670, 2017.

\bibitem{Tahir_jaimoukha_2015}
F.~Tahir and I.~M. Jaimoukha, ``Low-complexity polytopic invariant sets for linear systems subject to norm-bounded uncertainty,'' {\em IEEE Transactions on Automatic Control}, vol.~60, no.~5, pp.~1416--1421, 2015.

\bibitem{914717}
E.~Kerrigan and J.~Maciejowski, ``Invariant sets for constrained nonlinear discrete-time systems with application to feasibility in model predictive control,'' in {\em Proceedings of the 39th IEEE Conference on Decision and Control (Cat. No.00CH37187)}, vol.~5, pp.~4951--4956 vol.5, 2000.

\bibitem{hiriart-urruty_fundamentals_2012}
J.~Hiriart-Urruty and C.~Lemaréchal, {\em Fundamentals of {Convex} {Analysis}}.Grundlehren {Text} {Editions}, Berlin Heidelberg: Springer, 2012.

\bibitem{rakovic_parameterized_2010}
S.~V. Rakovic and M.~Baric, ``Parameterized robust control invariant sets for linear systems: {Theoretical} advances and computational remarks,'' {\em IEEE Transactions on Automatic Control}, vol.~55, no.~7, pp.~1599--1614, 2010.
\bibitem{caratheodory_uber_1911}
C.~Carathéodory, ``Über den variabilitätsbereich der fourier’schen konstanten von positiven harmonischen funktionen,'' {\em Rendiconti del Circolo Matematico di Palermo (1884-1940)}, vol.~32, pp.~193--217, Dec. 1911.

\bibitem{antsaklis2006linear}
P.~Antsaklis and A.~Michel, {\em Linear Systems}.
\newblock Boston: Birkh{\"a}user Boston, 2006.

\bibitem{rakovic_kerrigan_mayne_2005_invariant}
S.~Rakovic, E.~Kerrigan, K.~Kouramas, and D.~Mayne, ``Invariant approximations of the minimal robust positively invariant set,'' {\em IEEE Transactions on Automatic Control}, vol.~50, no.~3, pp.~406--410, 2005.

\bibitem{Camacho2007}
E.~F. Camacho and C.~Bordons, {\em Constrained Model Predictive Control}, pp.~177--216. London: Springer London, 2007.

\bibitem{mayne2000constrained}
D.~Q. Mayne, J.~B. Rawlings, C.~V. Rao, and P.~O. Scokaert, ``Constrained model predictive control: Stability and optimality,'' {\em Automatica}, vol.~36, no.~6, pp.~789--814, 2000.

\bibitem{mejari_data-driven_2023}
M.~Mejari, S.~K. Mulagaleti, and A.~Bemporad, ``Data-{Driven} {Synthesis} of {Configuration}-{Constrained} {Robust} {Invariant} {Sets} for {Linear} {Parameter}-{Varying} {Systems},'' Sept. 2023. arXiv:2309.06998 [cs, eess, math].
\bibitem{Lofberg2004}
J.~L{\"{o}}fberg, ``Yalmip : A toolbox for modeling and optimization in matlab,'' in {\em In Proceedings of the CACSD Conference}, (Taipei, Taiwan), 2004.

\bibitem{MPT3}
M.~Herceg, M.~Kvasnica, C.~Jones, and M.~Morari, ``{Multi-Parametric Toolbox 3.0},'' in {\em Proc.~of the European Control Conference}, (Z\"urich, Switzerland), pp.~502--510, July 17--19 2013.

\bibitem{ernesto2024control}
J.~G. Ernesto {\em et~al.}, {\em Control design for constrained LTI and LPV systems via polyhedral set invariance}.
\newblock PhD thesis, UNIVERSIDADE FEDERAL DE SANTA CATARINA, 2024.

\bibitem{coupled_tanks_2}
X.~Zhou, C.~Li, T.~Huang, and M.~Xiao, ``Fast gradient-based distributed optimisation approach for model predictive control and application in four-tank benchmark,'' {\em IET Control Theory \& Applications}, vol.~9, no.~10, pp.~1579--1586, 2015.


\bibitem{althoff_goran_girard_tech_reach}
M.~Althoff, G.~Frehse, and A.~Girard, ``Set propagation techniques for reachability analysis,'' {\em Annual Review of Control, Robotics, and Autonomous Systems}, vol.~4, no.~Volume 4, 2021, pp.~369--395, 2021.

\bibitem{rudin_functional_1991}
W.~Rudin, {\em Functional {Analysis}}.
\newblock International series in pure and applied mathematics, McGraw-Hill, 1991.

\bibitem{9781450315678} M. ~ Rungger, and M. ~Mazo, and P.~Tabuada,''Specification-guided controller synthesis for linear systems and safe linear-time temporal logic'', 2013 ACM HSCC conference, pp.~333–342, 2013.


\bibitem{1618830}
S.V. ~Rakovic,  and E.C.~Kerrigan,  and D.Q.~Mayne,  and J.~Lygeros, ''Reachability analysis of discrete-time systems with disturbances'', IEEE Transactions on Automatic Control, volume 51 , pp. ~ 546-561, 2006.

\bibitem{9992752}
A.~Saoud and M.~Arcak, ``Characterizations and Computation of Controlled Invariants for Monotone Dynamical Systems,'' in {\em Proc. IEEE 61st Conf. Decision and Control (CDC)}, pp.~4990--4995, 2022.

\bibitem{saoud_characterization_2024}
A.~Saoud and M.~Arcak, ``Characterization, verification and computation of robust controlled invariants for monotone dynamical systems,'' {\em Mathematics of Control, Signals, and Systems}, vol.~36, no.~1, pp.~71--100, Mar. 2024.

\bibitem{WAFOWEMBE2024135}
E.~J. Wafo~Wembe and A.~Saoud, ``On Robust Controlled Invariants for Continuous-time Monotone Systems,'' {\em IFAC-PapersOnLine}, vol.~58, no.~11, pp.~135--140, 2024. 
\newblock Note: 8th IFAC Conf. Analysis and Design of Hybrid Systems (ADHS).
\end{thebibliography}
\end{document}